\theoremstyle{plain}
\newtheorem{theorem}{\bf Theorem}
\newtheorem*{theorem*}{Theorem}
\newtheorem{conjecture}[theorem]{\bf Conjecture}
\newtheorem{proposition}[theorem]{\bf Proposition}
\newtheorem{lemma}[theorem]{\bf Lemma}
\theoremstyle{definition}
\newenvironment{remark}[1][Remark.]{\begin{trivlist}
		\item[\hskip \labelsep {\bfseries #1}]}{\end{trivlist}}
\newcommand{\Rea}{{\mathbb R}}
\DeclareMathOperator{\boxx}{box}
\DeclareMathOperator{\rep}{rep}
\newcommand{\lfrac}[2]{\left\lfloor \frac{#1}{#2}\right\rfloor}
\begin{document}
	\title{Representability and boxicity of simplicial complexes}
\author{Alan Lew\footnote{Department of Mathematics, Technion, Haifa 32000, Israel.		
		 e-mail: alan@campus.technion.ac.il .  Supported by ISF grant no. 326/16.}}
	
	\date{}
	\maketitle

\begin{abstract}
    Let $X$ be a simplicial complex on vertex set $V$. We say that $X$ is $d$-representable if it is isomorphic to the nerve of a family of convex sets in $\mathbb{R}^d$. 
    We define the $d$-boxicity of $X$ as the minimal $k$ such that $X$ can be written as the intersection of $k$ $d$-representable simplicial complexes. This generalizes the notion of boxicity of a graph, defined by Roberts.
    
    A missing face of $X$ is a set $\tau\subset V$ such that $\tau\notin X$ but $\sigma\in X$ for any $\sigma\subsetneq \tau$.     We prove that the $d$-boxicity of a simplicial complex on $n$ vertices without missing faces of dimension larger than $d$ is at most $\left\lfloor\frac{1}{d+1}\binom{n}{d}\right\rfloor$. The bound is sharp: the $d$-boxicity of a simplicial complex whose set of missing faces form a Steiner $(d,d+1,n)$-system is exactly $\frac{1}{d+1}\binom{n}{d}$. 
\end{abstract}

\section{Introduction}

Let $\mathcal{F}=\{F_1,\ldots,F_n\}$ be a family of sets. The \emph{intersection graph} of $\mathcal{F}$ is the graph on vertex set $[n]$, whose edges are the pairs $\{i,j\}$ for $1\leq i<j\leq n$ such that $F_i\cap F_j\neq \emptyset$.
A graph $G=(V,E)$ is called an \emph{interval graph} if it is isomorphic to the intersection graph of a family of compact intervals in the real line.

%A graph $G=(V,E)$ is called an \emph{interval graph} if there exists a family $\{I_v\}_{v\in V}$ of compact intervals in the real line, such that for any two vertices $u\neq v\in V$, the edge $\{u,v\}$ belongs to $E$ if and only if $I_u\cap I_v\neq \emptyset$.

Let $G$ be a graph. The \emph{boxicity} of $G$, denoted by $\boxx(G)$, is the minimal integer $k$ such that $G$ can be written as the intersection of $k$ interval graphs.
Equivalently, $\boxx(G)$ is the minimal $k$ such that $G$ is isomorphic to the intersection graph of a family of axis-parallel boxes in $\Rea^k$.

The notion of boxicity was introduced by Roberts in \cite{roberts1969boxicity}. The following result was first proved by Roberts in \cite{roberts1969boxicity}, and later rediscovered by Witsenhausen in \cite{witsenhausen1980intersections}:

\begin{theorem}[Roberts \cite{roberts1969boxicity}, Witsenhausen \cite{witsenhausen1980intersections}]\label{thm:roberts}
Let $G$ be a graph with $n$ vertices. Then 
\[
        \boxx(G)\leq \lfrac{n}{2}.
\]
Moreover, $\boxx(G)=\frac{n}{2}$ if and only if $G$ is the complete $\frac{n}{2}$-partite graph with sides of size $2$.
\end{theorem}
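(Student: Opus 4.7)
My plan is induction on $n$, proving the upper bound and the characterization of equality simultaneously, and verifying sharpness directly on the complete $\frac{n}{2}$-partite graph with sides of size $2$. For the base cases $n\le 2$ the statement is immediate. For the inductive step, I separate two cases. If $G$ has a universal vertex $w$, then $\boxx(G)=\boxx(G-w)$, since any box representation of $G-w$ extends by assigning $w$ a box containing all the others; by induction this gives $\boxx(G)\le\lfrac{n-1}{2}\le\lfrac{n}{2}$. Otherwise, pick any non-edge $\{u,v\}$ of $G$; the key reduction lemma I will establish is
\[
    \boxx(G)\le \boxx(G-\{u,v\})+1,
\]
which combined with induction yields $\boxx(G)\le\lfrac{n-2}{2}+1=\lfrac{n}{2}$.

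To prove the reduction lemma, I start from a representation $G-\{u,v\}=I'_1\cap\cdots\cap I'_k$ and extend each $I'_j$ to an interval graph $I_j$ on all of $V$ by giving $u$ and $v$ intervals large enough to contain every interval of $I'_j$, making both vertices universal in $I_j$. I then construct one extra interval graph $I_{k+1}$ encoding the correct $G$-neighborhoods of $u$ and $v$: place $u$ at a short interval at the far left, $v$ at a short interval at the far right, and each other vertex $w\in V\setminus\{u,v\}$ at an interval reaching left to meet $u$'s interval iff $w\in N_G(u)$ and right to meet $v$'s interval iff $w\in N_G(v)$, arranged so that all such intervals share a common central point. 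Then $I_{k+1}$ is complete on $V\setminus\{u,v\}$, contains $G$, and $\bigcap_{j=1}^{k+1}I_j=G$ exactly.

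For the sharpness direction I verify directly that the complete $\frac{n}{2}$-partite graph $K$ with sides of size $2$ has $\boxx(K)=\frac{n}{2}$. The upper bound follows from above. For the lower bound, any box representation in $\Rea^k$ must separate each of the $\frac{n}{2}$ non-edges in at least one coordinate, and I claim no single coordinate can separate two distinct non-edges: if coordinate $i$ separated both $\{u_j,v_j\}$ and $\{u_{j'},v_{j'}\}$, then WLOG the $i$-interval of $u_j$ lies strictly left of that of $v_j$, while the $i$-intervals of $u_{j'}$ and $v_{j'}$ must each meet both of these (as the corresponding cross-pair edges are present in $K$), forcing them to bridge the gap and hence to meet each other---contradicting that they too are separated in coordinate $i$. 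Thus $k\ge\frac{n}{2}$.

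For the ``only if'' direction of the equality statement, assume $\boxx(G)=\lfrac{n}{2}$. Tracing the induction: $G$ can have no universal vertex, and for every non-edge $\{u,v\}$ the inequality $\boxx(G)\le\boxx(G-\{u,v\})+1$ must be tight, giving $\boxx(G-\{u,v\})=\frac{n-2}{2}$. Applying the inductive equality statement to $G-\{u,v\}$, this graph must already be the complete $\frac{n-2}{2}$-partite graph with sides of size $2$; equivalently, $\bar G$ restricted to $V\setminus\{u,v\}$ is a perfect matching. The main remaining task---and the step I expect to be the principal obstacle---is to show that this compatibility constraint, applied across \emph{all} non-edges $\{u,v\}$ of $G$, forces $\bar G$ itself to be a perfect matching. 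I would approach it by assuming for contradiction that some vertex $u$ has two non-neighbors $v,v'$ in $G$, extracting the forced partners of $v$ and $v'$ in the two perfect matchings $\bar G|_{V\setminus\{u,v\}}$ and $\bar G|_{V\setminus\{u,v'\}}$, and tracking how those partners must interlock inside $\bar G$ until a contradiction emerges.
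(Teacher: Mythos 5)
Your upper bound and sharpness arguments are sound and complete in outline: the reduction $\boxx(G)\leq\boxx(G-\{u,v\})+1$ for a non-edge $\{u,v\}$ (with the extra interval graph making $V\setminus\{u,v\}$ a clique and encoding the true neighborhoods of $u$ and $v$) is the classical Roberts construction, and your ``no coordinate can separate two distinct non-edges of $K_{2,\ldots,2}$'' argument for the lower bound is correct. These parts correspond, respectively, to the $d=1$ case of Theorem \ref{thm:main_cor_intro_version}/Proposition \ref{prop:domination_number} (where the pairs are chosen all at once as a maximal matching in the complement rather than peeled off inductively) and to a combinatorial substitute for the homological obstruction in Lemma \ref{lemma:obstruction_to_d_leray}. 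Note that the paper itself does not reprove this theorem; it cites Witsenhausen for the $d=1$ equality case.

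The genuine gap is in the ``only if'' direction, exactly where you flagged it, and it is worse than a missing computation: the conditions you extract from the induction are \emph{not sufficient} to force the conclusion. Concretely, take $n=4$ and $G=\overline{K_4}$ (the empty graph). It has no universal vertex, and for every non-edge $\{u,v\}$ the graph $G-\{u,v\}$ is $\overline{K_2}$, which is the extremal graph on $2$ vertices with boxicity $1=(n-2)/2$; yet $\boxx(\overline{K_4})=1\neq 2$ and $\overline{G}=K_4$ is not a perfect matching. So the implication ``no universal vertex, and $\overline{G}$ restricted to $V\setminus\{u,v\}$ is a perfect matching for every non-edge $\{u,v\}$, therefore $\overline{G}$ is a perfect matching'' is false as stated, and your induction cannot close at $n=4$ from the base cases $n\leq 2$; you must verify directly that $C_4$ is the unique $4$-vertex graph of boxicity $2$ and make that a base case. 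For $n\geq 6$ the combinatorial claim is true but still needs the argument you only sketch: if some vertex $u$ of $\overline{G}$ had two neighbours $v_1,v_2$, the perfect-matching conditions for the edges $\{u,v_1\}$, $\{u,v_2\}$, $\{v_1,x\}$ (where $x$ is the partner of $v_2$ in $\overline{G}|_{V\setminus\{u,v_1\}}$) force $\{u,v_1,v_2,x\}$ to be a connected component of $\overline{G}$ containing a vertex of degree at least $2$; since $n\geq 6$ and $\overline{G}$ has no isolated vertices there is an edge $\{y,z\}$ disjoint from this component, and $\overline{G}|_{V\setminus\{y,z\}}$ then fails to be a perfect matching. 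Without this step (and the $n=4$ base case) the characterization of equality is unproven.
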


Let $\mathcal{F}=\{F_1,\ldots,F_n\}$ be a family of sets. The \emph{nerve} of $\mathcal{F}$ is the simplicial complex
\[
    N(\mathcal{F})=\left\{ \sigma\subset [n]: \, \cap_{i\in \sigma} F_i\neq \emptyset\right\}.
\]
Let $X$ be a simplicial complex. We say that $X$ is \emph{$d$-representable} if it is isomorphic to the nerve of a family $\mathcal{C}$ of compact convex sets in $\Rea^d$. We call the family $\mathcal{C}$ a \emph{representation} of $X$ in $\Rea^d$. The \emph{representability} of $X$, denoted by $\rep(X)$, is the minimal $k$ such that $X$ is $k$-representable.

For every $d\geq 1$, we define the \emph{$d$-boxicity} of $X$, denoted by $\boxx_d(X)$, as the minimal $k$ such that $X$ can be written as the intersection of $k$ $d$-representable simplicial complexes.

Let $G=(V,E)$ be a graph. The \emph{clique complex} of $G$, denoted by $X(G)$, is the simplicial complex on vertex set $V$ whose simplices are the cliques in $G$, that is, the sets $U\subset V$ satisfying $\{u,w\}\in E$ for all $u,w\in U$ such that $u\neq w$.

Let $\mathcal{B}=\{B_1,\ldots,B_n\}$ be a family of axis-parallel boxes in $\Rea^k$. It is well known that any $t$ boxes $B_{i_1},\ldots,B_{i_t}$ have a point in common if and only if $B_{i_j}\cap B_{i_r}\neq \emptyset$ for every $1\leq j<r\leq t$. Therefore, the nerve $N(\mathcal{B})$ is exactly the clique complex of the intersection graph of $\mathcal{B}$.
So, for any graph $G$, we have $\boxx(G)=\boxx_1(X(G))$. Thus, we can see the parameters $\boxx_d(X)$ as higher dimensional generalizations of the  boxicity of a graph.

Let $X$ be a simplicial complex.  A missing face of $X$ is a set $\tau\subset V$ such that $\tau\notin X$ but $\sigma\in X$ for any $\sigma\subsetneq \tau$. Let $h(X)$ be the maximal dimension of a missing face of $X$.
Note that a complex $X$ satisfies $h(X)=1$ if and only if it is the clique complex of some graph $G$ (the missing faces of $X(G)$ are the edges of the complement graph of $G$).

In \cite{witsenhausen1980intersections}, Witsenhausen extended Theorem \ref{thm:roberts}, proving that any simplicial complex $X$ with $n$ vertices whose missing faces are all of dimension exactly $d$ has $d$-boxicity at most $\frac{1}{2}\binom{n}{d}$. On the other hand, he showed that a complex $X$ whose missing faces form a Steiner triple system (in particular, $h(X)=2)$ has $2$-boxicity at least $\frac{1}{3}\binom{n}{2}$.

Here, we extend Witsenhausen's lower bound to all values of $d$, and prove an improved upper bound, matching the lower bound:

A family $\mathcal{F}$ of subsets of size $k$ of a set $V$ of size $n$ is called a \emph{Steiner $(t,k,n)$-system} if any subset of $V$ of size $t$ is contained in exactly one set of $\mathcal{F}$. If any subset of $V$ of size $t$ is contained in \emph{at most} one set of $\mathcal{F}$, then $\mathcal{F}$ is called a \emph{partial Steiner $(t,k,n)$-system}.

\begin{theorem}\label{thm:boxd}
Let $X$ be a simplicial complex with $n$ vertices, satisfying $h(X)\leq d$. Then
\[
    \boxx_d(X)\leq \left\lfloor\frac{1}{d+1}\binom{n}{d}\right\rfloor.
\]
Moreover, if $h(X)=d$, then $\boxx_d(X)=\frac{1}{d+1}\binom{n}{d}$ if and only if the missing faces of $X$ form a Steiner $(d,d+1,n)$-system. %That is, all the missing faces are of size $d+1$, and any subset of $V$ of size $d$ is contained in exactly one missing face.
\end{theorem}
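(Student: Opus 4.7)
My plan is to prove the upper bound via an explicit construction of $d$-representable complexes whose intersection is $X$, and to prove the lower bound and characterization via a Helly-type argument combined with the Steiner property.

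For the upper bound, I would construct, for each $d$-subset $\rho \subseteq [n]$ and subset $V \subseteq [n] \setminus \rho$, an explicit $d$-representable complex $Y_{\rho, V}$ whose minimal non-faces are exactly $\{\rho \cup \{v\} : v \in V\}$. After relabeling so $\rho = \{1, \ldots, d\}$, take $C_i = \{x \in \Rea^d : x_i = 0\}$ for $i \in \rho$, let $C_v = \{x : x_1 + \cdots + x_d = 1\}$ for every $v \in V$ (all sharing this affine hyperplane), and let $C_w = \Rea^d$ for the remaining vertices; a direct computation (using truncation to a large ball for compactness) shows the nerve has the claimed missing faces. Setting $V_\rho := \{v \in [n] \setminus \rho : \rho \cup \{v\} \notin X\}$ and $Y_\rho := Y_{\rho, V_\rho}$, one has $Y_\rho \supseteq X$. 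When $h(X) = d$, the identity $\bigcap_{\rho \in S} Y_\rho = X$ holds iff every missing $(d+1)$-face of $X$ contains some $\rho \in S$. Since each such face has exactly $d+1$ many $d$-subsets, the uniform fractional cover of weight $1/(d+1)$ on every $d$-subset has total mass $\binom{n}{d}/(d+1)$, and I would realize the integer floor bound via a partition of $\binom{[n]}{d}$ using a suitable covering design. The case $h(X) < d$ is handled by reducing to $\boxx_{d-1}(X)$ (via the inclusion $\boxx_d \leq \boxx_{d-1}$ and the theorem for smaller $d$) or by separately handling small missing faces with dedicated complexes, absorbed into the count.

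For the lower bound, suppose the missing faces of $X$ form a Steiner $(d, d+1, n)$-system $M$ with $|M| = \binom{n}{d}/(d+1)$. The key lemma is that each $d$-representable $Y \supseteq X$ misses at most one element of $M$. Suppose for contradiction $\tau_1 \neq \tau_2$ in $M$ are both absent from $Y$; the Steiner property forces $|\tau_1 \cap \tau_2| \leq d - 1$, so $|\tau_1 \cup \tau_2| \geq d + 3$. Using that every $d$-subset of $[n]$ is a face of $X$ (since $h(X) = d$), I would apply a Helly-type argument in $\Rea^d$ to the convex sets indexing $\tau_1 \cup \tau_2$, exploiting the Steiner property to control which $(d+1)$-subsets of $\tau_1 \cup \tau_2$ may themselves be blocks of $M$, to force $\tau_1$ or $\tau_2$ to be a face of $Y$, contradicting the assumption. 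A counting argument then gives $\boxx_d(X) \geq |M| = \binom{n}{d}/(d+1)$. The ``only if'' direction of the characterization follows by noting that equality in the bound forces $X$ to have exactly $\binom{n}{d}/(d+1)$ missing $(d+1)$-faces, and analyzing the obstruction to sharing $d$-subsets forces $M$ to be a Steiner system.

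The main obstacle is the Helly-type key lemma in the lower bound, especially for $d \geq 3$ where multiple Steiner blocks may lie inside $\tau_1 \cup \tau_2$; handling this cleanly requires exploiting structural properties of Steiner systems beyond the basic uniqueness of blocks through a $d$-subset. A secondary challenge is achieving the integer floor bound (rather than the naive ceiling from a covering design) in the upper-bound hitting family when $(d+1) \nmid \binom{n}{d}$ and the missing faces of $X$ are numerous.
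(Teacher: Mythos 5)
There is a genuine gap in your upper bound: the building blocks $Y_{\rho,V}$ are too weak. Every missing face of $Y_{\rho,V}$ contains the fixed $d$-set $\rho$, so writing $X=\bigcap_{\rho\in S}Y_{\rho}$ forces $S$ to be a transversal of the family of $d$-element subsets of the missing $(d+1)$-faces, and that transversal number is not bounded by $\left\lfloor\frac{1}{d+1}\binom{n}{d}\right\rfloor$. Already for $d=1$, $n=4$ and $X$ the clique complex of the empty graph (all six pairs missing), your scheme needs a vertex cover of $K_4$, i.e.\ three complexes, while the claimed bound is $2$; the uniform fractional cover of weight $\frac{1}{d+1}$ simply does not round. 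Moreover $Y_{\rho,V}$ has no missing faces of size $\le d$, so whenever $X$ has a missing face of dimension $<d$ (which is permitted even when $h(X)=d$) the intersection $\bigcap_\rho Y_\rho$ strictly contains $X$, and covering such faces by dedicated complexes can blow the count (for $d=2$ there can be order $n^2/2$ missing edges against a budget of roughly $n^2/6$). The paper avoids both problems with a stronger building block (Theorem \ref{prop:dom_by_one}): for a $(d+1)$-set $U\notin X$, the complex whose missing faces are \emph{all} missing faces $\tau$ of $X$ with $|\tau\setminus U|\le 1$ is $(|U|-1)$-representable, via a Wegner-style construction shrinking facets of a simplex; this captures all $(d+1)$-sets meeting $U$ in at least $d$ points as well as the small missing faces near $U$. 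The covering family is then a maximal partial Steiner $(d,d+1,n)$-system $\mathcal{A}$ inside the set of $(d+1)$-non-faces: maximality forces every missing face to be within distance one of some member of $\mathcal{A}$, and the partial Steiner property gives $|\mathcal{A}|\le\left\lfloor\frac{1}{d+1}\binom{n}{d}\right\rfloor$ outright, with no integrality issue.

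The lower bound's key lemma, which you correctly identify as the main obstacle, is left unproved, and the paper's resolution is not a Helly-type argument on the convex sets. First, among all pairs of missing faces co-assigned to the same factor $X_i$, one chooses $\tau_1,\tau_2$ with $|\tau_1\cap\tau_2|$ \emph{maximal}; a short counting argument then shows that every other missing face $\tau$ of $X_i$ contained in $\tau_1\cup\tau_2$ satisfies $\tau\cup\tau_1=\tau\cup\tau_2=\tau_1\cup\tau_2$. This is precisely the hypothesis of Lemma \ref{lemma:obstruction_to_d_leray}, which, via the combinatorial Alexander duality of Theorem \ref{thm:alexander_nerve}, yields nonvanishing homology of $X_i[\tau_1\cup\tau_2]$ in degree at least $d$ — contradicting the fact that $X_i$ is $d$-Leray (a weaker property than $d$-representability, so the result is stronger than what you aim for). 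The structural control over the ``multiple blocks inside $\tau_1\cup\tau_2$'' that you were missing comes from the maximality of $|\tau_1\cap\tau_2|$, not from finer properties of Steiner systems. Finally, your sketch of the ``only if'' direction leans on the flawed upper-bound machinery; the paper's Proposition \ref{prop:domination_number} handles it for $d\ge2$ by a swap argument on the maximal partial Steiner system, citing Witsenhausen for $d=1$.
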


Let $\mathbb{F}$ be a field. For $k\geq 0$, let $\tilde{H}_k(X)$ be the $k$-th reduced homology group of $X$ with coefficients in $\mathbb{F}$.
We say that $X$ is \emph{$d$-Leray} if for any induced subcomplex $Y$ of $X$, 
$
    \tilde{H}_k(Y)=0
$  
for all $k\geq d$. The \emph{Leray number} of $X$, denoted by $L(X)$, is the minimal $d$ such that $X$ is $d$-Leray.

It is a well known fact that for any complex $X$,
\[
    L(X)\leq \rep(X).
\]
That is, any $d$-representable complex is $d$-Leray (see e.g. \cite{grunbaum1963helly,Weg75}).

% In \cite{witsenhausen1980intersections}, Witsenhausen proved that if the missing faces of $X$ form a Steiner $(2,3,n)$-system, then  $\boxx_2(X)\geq \frac{1}{3}\binom{n}{2}$, showing that the bound in Theorem \ref{thm:boxd} is sharp for $d=2$. Here we extend his result to all $d\geq 1$. In fact, we prove the following stronger result:

The equality case in Theorem \ref{thm:boxd} follows from the following more general result:

\begin{theorem}\label{prop:lower_bound}
Let $X$ be a complex whose set of missing faces is a partial Steiner $(d,d+1,n)$-system $\mathcal{M}$.
Then, $X$ cannot be written as the intersection of less than $|\mathcal{M}|$ $d$-Leray complexes. 
As a consequence, 
\[
    \boxx_d(X)=  |\mathcal{M}|.
\]
\end{theorem}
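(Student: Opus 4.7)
\medskip\noindent\textbf{Proof Proposal.}

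The upper bound $\boxx_d(X) \leq |\mathcal{M}|$ follows from the observation that for each $\tau \in \mathcal{M}$ the complex $X_\tau := \{\sigma \subseteq V : \tau \not\subseteq \sigma\}$ has $\tau$ as its unique missing face, is easily seen to be $d$-representable, and $X = \bigcap_{\tau \in \mathcal{M}} X_\tau$. The substantive content is the lower bound: if $X = \bigcap_{i=1}^k Y_i$ with each $Y_i$ a $d$-Leray complex, then $k \geq |\mathcal{M}|$.

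The plan for the lower bound is a pigeonhole argument. First, every $\tau \in \mathcal{M}$ is a missing face of some $Y_i$: since $\tau \notin X$, at least one $Y_i$ excludes $\tau$, while all proper subsets of $\tau$ belong to $X \subseteq Y_i$, so $\tau$ is a minimal non-face of $Y_i$. Define $\mathcal{M}_i := \{\tau \in \mathcal{M} : \tau \text{ is missing in } Y_i\}$; then $\mathcal{M} = \bigcup_i \mathcal{M}_i$, so it suffices to prove $|\mathcal{M}_i| \leq 1$ for each $i$, which yields $|\mathcal{M}| \leq k$.

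Suppose for contradiction that $\tau_1 \neq \tau_2$ both lie in some $\mathcal{M}_i$. The partial Steiner property gives $t := |\tau_1 \cap \tau_2| \leq d-1$, so $W := \tau_1 \cup \tau_2$ satisfies $|W| = 2d+2-t \geq d+3$. Because $X$ has complete $(d-1)$-skeleton (all missing faces have size $d+1$), so does $Y_i \supseteq X$ and hence the induced subcomplex $Y_i[W]$. By combinatorial Alexander duality,
\[
    \tilde{H}_k(Y_i[W]) \cong \tilde{H}_{|W|-k-3}\Bigl(\bigcup_{\sigma \in \mathrm{MF}(Y_i[W])} \Delta^{W \setminus \sigma}\Bigr),
\]
where $\mathrm{MF}$ denotes the set of missing faces. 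The dual union automatically contains the two vertex-disjoint simplices $\Delta^{\tau_2 \setminus \tau_1}$ and $\Delta^{\tau_1 \setminus \tau_2}$ coming from $\tau_1, \tau_2$. If these were its only constituents the dual would have $\tilde{H}_0 \neq 0$, forcing $\tilde{H}_{|W|-3}(Y_i[W]) \neq 0$ with $|W|-3 = 2d-1-t \geq d$ and contradicting the $d$-Leray hypothesis.

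\textbf{The main obstacle} is that $Y_i[W]$ may have further missing faces---other elements of $\mathcal{M}$ lying inside $W$ that are also missing in $Y_i$ (a phenomenon that already arises for $d \geq 3$), or larger minimal non-faces of $Y_i$ inside $W$---whose dual simplices $\Delta^{W \setminus \sigma}$ could bridge the two components of the dual and annihilate the obstructing class. Overcoming this requires exploiting the partial Steiner inequality $|\sigma \cap \tau_j| \leq d-1$ for every $\sigma \in \mathcal{M}$ and $j \in \{1,2\}$, which forces any such $\sigma \subseteq W$ to contain at least two vertices in each of $\tau_1 \setminus \tau_2$ and $\tau_2 \setminus \tau_1$ and thereby constrains the intersection pattern of the dual cover. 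I expect the cleanest resolution is a Nerve Lemma analysis of $\{\Delta^{W \setminus \sigma}\}_\sigma$, combined with the rigidity of partial Steiner systems, to show that this nerve cannot be $(d-1-t)$-connected; equivalently, $\tilde{H}_{|W|-j-3}(Y_i[W]) \neq 0$ for some $j \in \{0,1,\ldots,d-1-t\}$, which produces a nontrivial homology class of $Y_i[W]$ in dimension $\geq d$ and gives the sought contradiction.
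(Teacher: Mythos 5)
Your setup matches the paper's: the same upper bound via the complexes $X_\tau$, the same reduction by pigeonhole to two missing faces $\tau_1,\tau_2$ of a common $d$-Leray factor, the same passage to the induced subcomplex on $W=\tau_1\cup\tau_2$, and the same Alexander-duality obstruction in $\tilde{H}_{|W|-3}$. But the step you flag as ``the main obstacle'' is precisely the substantive content of the lower bound, and your proposal does not resolve it: you only state that you \emph{expect} a Nerve Lemma analysis of the dual cover, combined with the Steiner rigidity, to produce a nonvanishing class. That plan is not carried out, and it is not clear it would succeed as stated --- without further input, other elements of $\mathcal{M}$ inside $W$ really can be adjacent to $\tau_1$ or $\tau_2$ in the dual complex (e.g.\ for disjoint $\tau_1,\tau_2$ with $d=4$, a $\sigma\subset W$ with $|\sigma\cap\tau_1|=3$, $|\sigma\cap\tau_2|=2$ satisfies $\sigma\cup\tau_1\neq W$), so the obstruction need not sit in $\tilde{H}_0$ of the dual, and you would have to prove nonvanishing of the dual homology in \emph{some} degree between $0$ and $d-1-t$, which is a genuinely harder connectivity statement. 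So as written there is a gap.

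The paper closes this gap not by analyzing the nerve but by a smarter choice of the pair: among all pairs of distinct elements of $\mathcal{M}$ that are missing faces of a common factor, take $\tau_1,\tau_2$ with $|\tau_1\cap\tau_2|=k$ \emph{maximal}. (One also needs that every missing face of each factor $Y_i$ lies in $\mathcal{M}$; this follows because $d$-Leray complexes have no missing faces of size $>d+1$, which disposes of your ``larger minimal non-faces'' worry.) Then for any other missing face $\tau$ of $Y_i[W]$, maximality gives $|\tau\cap\tau_1|\leq k$ and $|\tau\cap\tau_2|\leq k$, and a short count shows $|\tau\setminus(\tau_1\cap\tau_2)|\geq 2(d+1-k)=|W\setminus(\tau_1\cap\tau_2)|$, hence $\tau\supseteq W\setminus(\tau_1\cap\tau_2)$ and $\tau\cup\tau_1=\tau\cup\tau_2=W$. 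This means $\tau_1$ and $\tau_2$ are isolated vertices of the dual complex $\Gamma=\{\mathcal{N}\subset\mathrm{MF}(Y_i[W]):\bigcup_{\sigma\in\mathcal{N}}\sigma\neq W\}$, so $\Gamma$ is disconnected, $\tilde{H}_0(\Gamma)\neq 0$, and by the Bj\"orner--Butler--Matveev duality $\tilde{H}_{|W|-3}(Y_i[W])\neq 0$ with $|W|-3\geq d$, contradicting $d$-Lerayness. Replacing your speculative nerve argument with this extremal choice of the pair completes your proof.
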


It was proved by R{\"o}dl in \cite{rodl1985packing} that, for any $d\geq 1$, there exist partial Steiner $(d,d+1,n)$-systems of size $(1-o(1))\frac{1}{d+1}\binom{n}{d}$. Therefore, the bound in Theorem \ref{thm:boxd} is asymptotically tight. Moreover, by a well known result of Keevash (\cite{keevash2014existence}), there exist Steiner $(d,d+1,n)$-systems for infinitely many values of $n$. Thus, the equality case in Theorem \ref{thm:boxd} is achieved for infinitely many values of $n$.

The upper bound in Theorem \ref{thm:boxd} follows as a consequence of the next result:
\begin{theorem}\label{thm:main_cor_intro_version}
Let $X$ be a simplicial complex on vertex set $V$.  Let $V_1,\ldots, V_k$ be subsets of $V$ satisfying $V_i\notin X$ for all $i\in[k]$, such that for any missing face $\tau$ of $X$ there exists some $i\in[k]$ satisfying $|\tau\setminus V_i|\leq 1$. 

Then, $X$ can be written as an intersection
$
       X= \cap_{i=1}^k X_i,
$
where, for all $i\in[k]$, $X_i$ is a $(|V_i|-1)$-representable complex.
In particular, $X$ is $\left(\sum_{i=1}^k (|V_i|-1)\right)$-representable.
\end{theorem}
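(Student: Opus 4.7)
The plan is to construct each $X_i$ explicitly, both combinatorially and as the nerve of a family of convex sets in $\mathbb{R}^{|V_i|-1}$. I would define $X_i$ as the complex of all $\sigma \subseteq V$ that contain no missing face $\tau$ of $X$ with $|\tau \setminus V_i| \leq 1$. The identity $X = \bigcap_{i=1}^k X_i$ is then immediate: every face of $X$ lies in each $X_i$, while every non-face of $X$ contains some missing face $\tau$, which by hypothesis is excluded from at least one $X_i$.

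Writing $\sigma_1 = \sigma \cap V_i$ and $\sigma_2 = \sigma \setminus V_i$, the definition of $X_i$ is equivalent to requiring $\sigma_1 \in X$ together with $\sigma_1 \cup \{u\} \in X$ for every $u \in \sigma_2$. The reason is that any missing face $\tau \subseteq \sigma$ with $|\tau \setminus V_i| \leq 1$ either sits inside $V_i$ (forcing $\sigma_1 \notin X$) or has the form $\tau_V \cup \{u\}$ with $\tau_V \subseteq \sigma_1$ and $u \in \sigma_2$ (which, assuming $\sigma_1 \in X$, is equivalent to $\sigma_1 \cup \{u\} \notin X$). This reformulation is what makes the geometric construction tractable.

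For the representation, set $m = |V_i|$ and fix a $(m-1)$-representation $\{D_v\}_{v \in V_i}$ of the induced subcomplex $X[V_i]$ in $\mathbb{R}^{m-1}$ (such a representation exists because every simplicial complex on $m$ vertices is $(m-1)$-representable). For each face $\sigma_1 \in X[V_i]$, pick a point $b_{\sigma_1} \in D_{\sigma_1}$ placed in sufficiently general position that $b_{\sigma_1} \notin D_v$ whenever $v \in V_i \setminus \sigma_1$. Set $C_v = D_v$ for $v \in V_i$, and $C_u = \operatorname{conv}\{b_{\sigma_1} : \sigma_1 \subseteq V_i,\ \sigma_1 \cup \{u\} \in X\}$ for $u \in V \setminus V_i$ (with $C_u = \emptyset$ if $\{u\} \notin X$). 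The claim is that the nerve of $\{C_v\}_{v \in V}$ equals $X_i$, which exhibits $X_i$ as a $(m-1)$-representable complex.

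The forward direction of the nerve identity is straightforward: if $\sigma = \sigma_1 \cup \sigma_2 \in X_i$, then $b_{\sigma_1}$ lies in $\bigcap_{v \in \sigma} C_v$, since $\sigma_1 \in X$ places it in $D_{\sigma_1}$ and each hypothesis $\sigma_1 \cup \{u\} \in X$ makes $\sigma_1$ one of the generators of $C_u$. The converse is the main obstacle: given $x \in \bigcap_v C_v$, the condition $x \in D_{\sigma_1}$ immediately yields $\sigma_1 \in X$, but showing $\sigma_1 \cup \{u\} \in X$ for each $u \in \sigma_2$ is harder. Writing $x = \sum_{\sigma'} \lambda^u_{\sigma'} b_{\sigma'}$ as a convex combination of the generators of $C_u$, the target is to force every $\sigma'$ with $\lambda^u_{\sigma'} > 0$ to contain $\sigma_1$, for then $\sigma_1 \in \lk_X(u) \cap 2^{V_i}$ by downward closure and $\sigma_1 \cup \{u\} \in X$ follows. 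This support constraint is precisely what the general-position choice of the $b_{\sigma_1}$'s must enforce; one must rule out the possibility that a convex combination of $b_{\sigma'}$'s indexed by faces missing some $v \in \sigma_1$ accidentally lands in $D_v$. I would address this by selecting the points $b_{\sigma_1}$ successively, each in a Zariski-dense open subset of $D_{\sigma_1}$, so that no such coincidental membership occurs.
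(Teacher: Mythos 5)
Your combinatorial reduction is exactly the paper's: you define the same complexes $X_i$, verify $X=\cap_{i=1}^k X_i$ the same way, and your equivalent description of $X_i$ in terms of $\sigma_1=\sigma\cap V_i$ and the conditions $\sigma_1\cup\{u\}\in X$ is correct. The gap is in the geometric step, and it is not a technicality: the claim you defer to a ``Zariski-dense / general position'' choice of the points $b_{\sigma_1}$ is false for an arbitrary representation $\{D_v\}_{v\in V_i}$ of $X[V_i]$. What you need is that whenever $\sigma_1\cup\{u\}\notin X$, the set $\mathrm{conv}\{b_{\sigma'} : \sigma'\cup\{u\}\in X\}$ is disjoint from $D_{\sigma_1}=\cap_{v\in\sigma_1}D_v$. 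Each individual generator $b_{\sigma'}$ does avoid $D_{\sigma_1}$ (no such $\sigma'$ contains $\sigma_1$), but a convex hull of points avoiding a convex set need not avoid it, and no choice of the points can repair this when the generators are forced to lie on opposite sides of $D_{\sigma_1}$. Concretely, take $V_i=\{1,2,3\}$ with $\{1,3\},\{2,3\}\in X$ and $\{1,2\}\notin X$ (so $V_i\notin X$), and represent $X[V_i]$ in $\Rea^2$ by a large disk $D_3$ together with two small disjoint disks $D_1,D_2$ protruding slightly from opposite sides of $D_3$. Every admissible $b_{\{1\}}$ and $b_{\{2\}}$ must lie in the two slivers $D_1\setminus D_3$ and $D_2\setminus D_3$, and the segment joining them necessarily crosses $D_3$; for a vertex $u$ with $\{1,u\},\{2,u\}\in X$ and $\{3,u\}\notin X$ this puts $\{3,u\}$ in the nerve of $\{C_v\}$ although $\{3,u\}\notin X_i$. (In a yet simpler configuration the very first step already fails: if $D_1\subset D_3$ there is no point of $D_1$ avoiding $D_3$ at all, so the required $b_{\{1\}}$ does not exist.)

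The missing ingredient is therefore a property of the configuration $\{D_v\}$, not of a generic choice of points inside it: one needs each $D_{\sigma_1}$ to admit a hyperplane strictly separating it from all admissible generators simultaneously. The paper secures this by not starting from an arbitrary representation of $X[V_i]$ at all: it takes a geometric simplex $P$ with facets $F_v$ indexed by $V_i$, so that each $F_\tau=\cap_{v\in\tau}F_v$ is an honest face of $P$ with a supporting hyperplane $H$ satisfying $H\cap P=F_\tau$; then every point of $P$ off $F_\tau$ lies in the open half-space determined by $H$, and hence so does any convex hull of such points (this is Lemma \ref{lemma:face_removal}, and it is also what lets the sets $C_v=F'_v$ for $v\in V_i$ be shrunk so as to miss the faces $F_\tau$ with $\tau\notin X$). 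With that replacement your construction essentially becomes the paper's Theorem \ref{prop:dom_by_one}. As written, however, the converse inclusion of the nerve identity --- which you yourself flag as the main obstacle --- is not established, and the proposed genericity argument cannot establish it.
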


The paper is organized as follows. In Section \ref{sec:1}, we present the necessary background on simplicial complexes that we will later need. In Section \ref{sec:intersection} we prove some simple results about the missing faces and the representability of intersections of complexes. Section \ref{sec:lower} contains the proof of Theorem \ref{prop:lower_bound}. In Section \ref{sec:upper} we prove Theorem \ref{thm:main_cor_intro_version}. In Section \ref{sec:boxd} we prove our main result, Theorem \ref{thm:boxd}. In Section \ref{sec:conc} we present some related open problems.

\section{Preliminaries}\label{sec:1}

For any set $U$, the \emph{complete complex} on vertex set $U$ is the complex
\[
    2^U=\{ \sigma: \sigma\subset U\}.
\]  
For $0\leq k\leq |U|-1$, the \emph{complete $k$-dimensional skeleton} on vertex set $U$ is the complex
\[
  \{\sigma\subset U:\, |\sigma|\leq k+1\}.
\]

Let $X$ be a simplicial complex on vertex set $V$. For $U\subset V$, the \emph{subcomplex of $X$ induced by $U$} is the complex
\[
    X[U]= \{ \sigma\in X:\, \sigma\subset U\}.
\]
Let $\mathcal{M}$ be the set of missing faces of $X$. Let
\[
    \Gamma(X)= \left\{\mathcal{N}\subset \mathcal{M}: \, \bigcup_{\tau\in\mathcal{N}}\tau \neq V\right\}.
\]
Note that $\Gamma(X)$ is a simplicial complex on vertex set $\mathcal{M}$. The homology groups of $X$ and $\Gamma(X)$ are related as follows:
\begin{theorem}[Bj{\"o}rner, Butler, Matveev \cite{bjorner1997note}]\label{thm:alexander_nerve}
If $X$ is not the complete complex on $V$, then for all $k\geq 0$,
\[
    \tilde{H}_k(X) \cong \tilde{H}_{|V|-k-3}(\Gamma(X)).
\]
\end{theorem}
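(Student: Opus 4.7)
The plan is to identify $\Gamma(X)$ up to homotopy with the \emph{Alexander dual} of $X$, and then apply combinatorial Alexander duality. Recall that for a simplicial complex $X$ on $V$ with $\emptyset \subsetneq X \subsetneq 2^V$, the Alexander dual is
\[
X^{\ast} = \{\sigma \subseteq V :\ V\setminus\sigma \notin X\},
\]
and Alexander duality yields $\tilde{H}_k(X) \cong \tilde{H}_{|V|-k-3}(X^{\ast})$ for all $k \geq 0$. Since by hypothesis $X$ is not the complete complex on $V$, we have $\emptyset \in X^{\ast}$ and $X^{\ast} \neq 2^V$, so it suffices to prove the homotopy equivalence $X^{\ast} \simeq \Gamma(X)$.

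To achieve this, I would cover $X^{\ast}$ by full simplices indexed by missing faces. For each $\tau \in \mathcal{M}$ with $\tau \neq V$, set $A_\tau := 2^{V \setminus \tau}$. This is a subcomplex of $X^{\ast}$, since any $\sigma \subseteq V \setminus \tau$ satisfies $\tau \subseteq V \setminus \sigma$, forcing $V \setminus \sigma \notin X$. Conversely, if $\sigma \in X^{\ast}$, then $V \setminus \sigma \notin X$; choosing a minimal such subset yields a missing face $\tau \subseteq V \setminus \sigma$, so $\sigma \in A_\tau$. Hence
\[
X^{\ast} = \bigcup_{\tau \in \mathcal{M},\ \tau \neq V} A_\tau.
\]
The intersections are transparent: for any $\mathcal{N} \subseteq \mathcal{M}$,
\[
\bigcap_{\tau \in \mathcal{N}} A_\tau = 2^{V \setminus \bigcup_{\tau \in \mathcal{N}}\tau},
\]
which has nonempty (and then contractible) geometric realization exactly when $\bigcup_{\tau \in \mathcal{N}} \tau \neq V$. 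This identifies the nerve of the cover with $\Gamma(X)$ (if $V \in \mathcal{M}$, it appears only as an unused vertex, contributing nothing topologically). Applying the nerve lemma for good covers of a simplicial complex by contractible subcomplexes gives $X^{\ast} \simeq \Gamma(X)$, and combining with Alexander duality yields the desired isomorphism.

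I expect the work to be largely bookkeeping: the only substantive check is that the cover is good, which is immediate because intersections of full simplices are full simplices, hence either contractible or empty. Minor edge cases—$V$ itself being a missing face, or $\mathcal{M}$ containing singletons—do not affect the argument, since they correspond either to unused vertices of $\Gamma(X)$ or to covering elements $A_{\{v\}}$ that are genuine full simplices on $V \setminus \{v\}$.
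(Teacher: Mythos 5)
The paper does not prove this statement: it is imported verbatim from the cited reference of Bj{\"o}rner, Butler and Matveev, so there is no internal proof to compare your attempt against. That said, your argument is a correct reconstruction of the standard proof of that result (and essentially the one in the original note): cover the Alexander dual $X^{*}$ by the full simplices $2^{V\setminus\tau}$ indexed by missing faces $\tau$, observe that all nonempty intersections are full simplices and that the nerve of the cover is exactly $\Gamma(X)$, apply the nerve lemma, and finish with combinatorial Alexander duality. Two small points are worth making explicit. First, combinatorial Alexander duality natively reads $\tilde{H}_i(X^{*})\cong \tilde{H}^{|V|-i-3}(X)$; converting the cohomology to homology uses that the paper takes coefficients in a field $\mathbb{F}$, so you should say so (over $\mathbb{Z}$ the statement would need adjusting). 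Second, the one genuinely degenerate case is $V\in\mathcal{M}$, which forces $\mathcal{M}=\{V\}$ and $X$ to be the boundary of the full simplex on $V$; there the cover is empty, $X^{*}=\{\emptyset\}=\Gamma(X)$, and the only nontrivial homology sits in degree $-1$ on the $\Gamma$ side (matching $k=|V|-2$ on the $X$ side), so it is cleaner to verify this case by hand than to route it through the nerve lemma. Neither point is a gap in the mathematics.
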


Finally, we will need the following simple property of $d$-Leray complexes (see e.g. \cite{grunbaum1963helly}) :
\begin{lemma}\label{lemma:helly_for_leray}
Let $X$ be a $d$-Leray complex. Then,
$
    h(X)\leq d.
$
\end{lemma}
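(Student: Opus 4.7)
The plan is to argue by contradiction: suppose for contradiction that $h(X) > d$, so there exists a missing face $\tau$ of $X$ with $\dim(\tau) = |\tau| - 1 \geq d+1$, equivalently $|\tau| \geq d+2$.

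Next I would consider the induced subcomplex $X[\tau]$. By the definition of a missing face, every proper subset $\sigma \subsetneq \tau$ belongs to $X$, while $\tau$ itself does not. Since the simplices of $X[\tau]$ are exactly the subsets of $\tau$ that lie in $X$, this says $X[\tau]$ coincides with the complete $(|\tau|-2)$-dimensional skeleton on vertex set $\tau$; that is, $X[\tau]$ is the boundary of the $(|\tau|-1)$-simplex, which is topologically a sphere $S^{|\tau|-2}$.

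Consequently $\tilde{H}_{|\tau|-2}(X[\tau]) \cong \mathbb{F} \neq 0$. But $|\tau| - 2 \geq d$, so this contradicts the assumption that $X$ is $d$-Leray, which requires $\tilde{H}_k(Y) = 0$ for every induced subcomplex $Y$ of $X$ and every $k \geq d$. Hence $h(X) \leq d$, as desired.

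No serious obstacle is anticipated here; the only step to be careful about is identifying $X[\tau]$ with the boundary of a simplex, which follows directly from the definition of a missing face. One could alternatively deduce this from Theorem \ref{thm:alexander_nerve}, but the direct argument above seems cleanest.
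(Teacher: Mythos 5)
Your proof is correct. The paper does not actually prove this lemma---it only cites a reference---but your argument is the standard one: a missing face $\tau$ with $|\tau|\geq d+2$ forces the induced subcomplex $X[\tau]$ to be the boundary of the $(|\tau|-1)$-simplex, whose nonvanishing reduced homology in dimension $|\tau|-2\geq d$ contradicts the $d$-Leray property. All steps check out, including the identification of $X[\tau]$ with $2^{\tau}\setminus\{\tau\}$, which is exactly what the definition of a missing face gives.
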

%Note that Lemma \ref{lemma:helly_for_leray}, together with the fact that any $d$-representable complex is $d$-Leray, implies a topological version Helly's well known theorem.
%Let $X$ be a $d$-Leray complex, and let $\tau$ be a missing face of $X$. Since $X[\tau]$ is the boundary of a $(|\tau|-1)$-dimensional simplex, we have $\tilde{H}_{|\tau|-2}(X[\tau])\neq 0$. Therefore, we must have $|\tau|\leq d+1$. Hence, we obtain the well known result:

\section{Intersection of simplicial complexes}\label{sec:intersection}

In this section we prove some basic results about the missing faces and the representability of intersections of complexes:

\begin{proposition}
\label{prop:helly_num_of_intersection}
Let $X_1,\ldots,X_k$ be simplicial complexes, and $X=\cap_{i=1}^k X_i$. For each $i\in[k]$, let $\mathcal{M}_i$ be the set of missing faces of $X_i$, and let $\mathcal{M}$ be the set of missing faces of $X$. Then, $\mathcal{M}$ is the set of inclusion minimal elements of $\cup_{i=1}^k \mathcal{M}_i$.
As a consequence, we obtain
\[
    h(X)\leq \max_{i\in[k]} \, h(X_i).
\]
\end{proposition}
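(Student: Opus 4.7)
The plan is to verify the two inclusions that pin down $\mathcal{M}$ as the set of inclusion-minimal elements of $\bigcup_{i=1}^k \mathcal{M}_i$, and then read off the bound on $h(X)$ as a one-line consequence.

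First, I would show that every missing face $\tau$ of $X$ belongs to some $\mathcal{M}_j$ and is minimal in $\bigcup_i\mathcal{M}_i$. Since $\tau\notin X=\bigcap_i X_i$, there is some index $j$ with $\tau\notin X_j$. Every proper subset $\sigma\subsetneq\tau$ lies in $X$ by definition of a missing face, hence in $X_j$, so $\tau\in\mathcal{M}_j\subset\bigcup_i\mathcal{M}_i$. For minimality, if some $\tau'\subsetneq\tau$ lay in $\mathcal{M}_\ell$, then $\tau'\notin X_\ell\supseteq X$ would give $\tau'\notin X$, contradicting the fact that $\tau'$ is a proper subset of a missing face of $X$ and hence belongs to $X$.

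For the reverse inclusion, let $\tau$ be inclusion-minimal in $\bigcup_i\mathcal{M}_i$, say $\tau\in\mathcal{M}_j$. Then $\tau\notin X_j$ forces $\tau\notin X$. Suppose some proper subset $\sigma\subsetneq\tau$ is not in $X$; then $\sigma\notin X_i$ for some $i$, and $\sigma$ contains a missing face of $X_i$ (take an inclusion-minimal non-face of $X_i$ inside $\sigma$). This yields an element of $\mathcal{M}_i$ strictly contained in $\tau$, contradicting the minimality of $\tau$. Hence every proper subset of $\tau$ lies in $X$, and $\tau\in\mathcal{M}$.

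The corollary $h(X)\leq\max_i h(X_i)$ is then immediate: every $\tau\in\mathcal{M}$ lies in some $\mathcal{M}_i$, so $\dim\tau\leq h(X_i)\leq\max_i h(X_i)$. There is no serious obstacle here; the only step that deserves a moment of care is the observation that any non-face $\sigma$ of $X_i$ contains an element of $\mathcal{M}_i$, which is just the content of picking a minimal witness inside $\sigma$.
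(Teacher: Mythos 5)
Your proof is correct and follows essentially the same argument as the paper: the same two inclusions, with the same minimal-witness argument for the reverse direction, and the same one-line deduction of the bound on $h(X)$.
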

\begin{proof}
Let $\tau\in\mathcal{M}$. Since $\tau\notin X$, then there exists some $j\in[k]$ such that $\tau\notin X_j$. Let $\sigma\subsetneq \tau$. Since $\tau$ is a missing face of $X$, we have $\sigma\in X=\cap_{i=1}^k X_i$. In particular, $\sigma\in X_j$. Hence, $\tau$ is a missing face of $X_j$. That is, $\tau\in \mathcal{M}_j \subset\cup_{i=1}^k \mathcal{M}_i$. Moreover, $\tau$ does not contain any other face of $\cup_{i=1}^k\mathcal{M}_i$. Otherwise, there exists some $r\in[k]$ and $\sigma\in\mathcal{M}_r$ such that $\sigma\subsetneq \tau$. Since $\sigma\notin X_r$, then $\sigma\notin X$. But this is a contradiction to $\tau$ being a missing face of $X$.

Now, let $\tau$ be an inclusion minimal element of $\cup_{i=1}^k \mathcal{M}_i$. Then $\tau\in \mathcal{M}_j$ for some $j\in[k]$. In particular, $\tau\notin X_j$, and therefore $\tau\notin X$. Now, let $\sigma\subsetneq \tau$. Assume for contradiction that $\sigma\notin X$. Then, there exists some $r\in[k]$ such that $\sigma\notin X_r$. So, there exists some $\eta\in \mathcal{M}_r$ such that $\eta\subset \sigma\subsetneq \tau$. This is a contradiction to $\tau$ being inclusion minimal in $\cup_{i=1}^k \mathcal{M}_i$. So, $\sigma\in X$. Therefore, $\tau$ is a missing face of $X$.

Since $\mathcal{M}\subset \cup_{i=1}^k \mathcal{M}_i$, we obtain
\[
    h(X)\leq \max_{i\in[k]} \, h(X_i).
\]

\end{proof}

\begin{lemma}\label{lemma:intersection}
Let $X_1,\ldots,X_k$ be simplicial complexes on vertex set $V$. If $X_i$ is $d_i$-representable for each $i\in[k]$, then $\cap_{i=1}^k X_i$ is $\left(\sum_{i=1}^k d_i\right)$-representable.
\end{lemma}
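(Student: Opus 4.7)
The plan is to reduce to the case $k=2$ by a straightforward induction on $k$. The inductive step is immediate once we handle two complexes: if $\bigcap_{i=1}^{k-1} X_i$ is $(\sum_{i=1}^{k-1} d_i)$-representable by induction, then intersecting with the $d_k$-representable complex $X_k$ and applying the $k=2$ case yields the result.

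For the base case $k=2$, I would use a product construction. Fix a representation $\mathcal{C}^{(1)} = \{C_1^{(1)}, \ldots, C_n^{(1)}\} \subset \Rea^{d_1}$ of $X_1$ and a representation $\mathcal{C}^{(2)} = \{C_1^{(2)}, \ldots, C_n^{(2)}\} \subset \Rea^{d_2}$ of $X_2$, and define
\[
    C_i = C_i^{(1)} \times C_i^{(2)} \subset \Rea^{d_1+d_2}
\]
for each vertex $i \in V$. Each $C_i$ is compact and convex, being a product of compact convex sets.

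The key computation is the identity
\[
    \bigcap_{i \in \sigma} C_i = \left(\bigcap_{i \in \sigma} C_i^{(1)}\right) \times \left(\bigcap_{i \in \sigma} C_i^{(2)}\right),
\]
which holds for every $\sigma \subset V$. The right-hand side is nonempty if and only if both factors are nonempty, i.e.\ if and only if $\sigma \in X_1$ and $\sigma \in X_2$, i.e.\ $\sigma \in X_1 \cap X_2$. Hence $N(\{C_i : i \in V\}) = X_1 \cap X_2$, showing that $X_1 \cap X_2$ is $(d_1+d_2)$-representable.

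There is no real obstacle here; this argument is essentially the standard observation that the nerve operation interacts well with Cartesian products of convex sets. The only thing to verify carefully is that the product of compact convex sets is compact and convex (immediate) and the set-theoretic identity for intersections of products (also immediate). The induction on $k$ then completes the proof.
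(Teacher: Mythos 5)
Your proof is correct and uses essentially the same product construction as the paper; the only cosmetic difference is that the paper forms the $k$-fold product $C_v^1\times\cdots\times C_v^k$ directly rather than inducting on $k$ via the $k=2$ case.
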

\begin{proof}
For $i\in[k]$, let $\{C_v^i\}_{v\in V}$ be a representation of $X_i$ in $\Rea^{d_i}$. 
For $v\in V$, let
\[
    C_v=C_v^1\times C_v^2 \times \cdots \times C_v^k.
\]
We will show that $\mathcal{C}=\{C_v\}_{v\in V}$ is a representation of $\cap_{i=1}^k X_i$ in $\Rea^{d_1}\times \cdots\times \Rea^{d_k}\cong \Rea^{d_1+\cdots+d_k}$.

Note that the sets $C_v$ are convex, and for any $\sigma\subset V$,
\begin{equation}\label{eq:prod_cross}
    \bigcap_{v\in \sigma} C_v = \left(\bigcap_{v\in \sigma} C_v^1\right) \times \cdots\times \left(\bigcap_{v\in \sigma} C_v^k\right).
\end{equation}

Let $\sigma\subset V$. If $\sigma\in \cap_{i=1}^k X_i$, then $\sigma\in X_i$ for all $i\in[k]$. Hence,  $\cap_{v\in \sigma} C_v^i\neq \emptyset$ for all $i\in[k]$. So, by Equation \eqref{eq:prod_cross},
$
    \cap_{v\in \sigma} C_v\neq \emptyset.
$
If $\sigma\notin \cap_{i=1}^k X_i$, then there exists some $i\in[k]$ such that $\sigma\notin X_i$. Therefore, $\cap_{v\in \sigma} C_v^i=\emptyset$. Thus, by Equation \eqref{eq:prod_cross},
$
    \cap_{v\in \sigma} C_v =\emptyset.
$
Hence, $\mathcal{C}$ is a representation of $\cap_{i=1}^k X_i$ in $\Rea^{d_1+\cdots+d_k}$.
\end{proof}

\section{Lower bounds on $d$-boxicity}\label{sec:lower}

In this section we prove Theorem \ref{prop:lower_bound}. 
For the proof we will need the following simple lemma, which is a generalization of \cite[Lemma 3]{witsenhausen1980intersections}:

\begin{lemma}\label{lemma:obstruction_to_d_leray}
Let $A,B$ be two finite sets, such that $|A|=|B|=d+1$, and $|A\cap B|<d$. Let $V=A\cup B$. Let $X$ be a simplicial complex on vertex set $V$ that has $A$ and $B$ as missing faces, and such that for any other missing face $\tau$ of $X$, $\tau\cup A= V$ and $\tau\cup B= V$. Then, there exists some $k\geq d$ such that $\tilde{H}_k(X)\neq 0$.
\end{lemma}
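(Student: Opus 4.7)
The plan is to translate the question via Alexander duality (Theorem~\ref{thm:alexander_nerve}) and then read off the answer from a connectedness statement for the combinatorial dual $\Gamma(X)$. Set $c=|A\cap B|$ and $N=|V|=|A|+|B|-c=2(d+1)-c$; then the duality shift gives $N-3 = 2d-1-c$, and the hypothesis $c<d$ is exactly what is needed to ensure $N-3\geq d$. So it will suffice to exhibit a nonzero class in $\tilde{H}_{N-3}(X)$, which by Theorem~\ref{thm:alexander_nerve} corresponds to a nonzero class in $\tilde{H}_0(\Gamma(X))$ — in other words, to show that $\Gamma(X)$ is disconnected.

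The vertex set of $\Gamma(X)$ is the set of missing faces of $X$, namely $A$, $B$, and possibly some further missing faces $\tau_1,\dots,\tau_m$. I would argue that $\{A\}$ and $\{B\}$ are isolated vertices of $\Gamma(X)$: the assumption $A\cup B=V$ immediately yields $\{A,B\}\notin\Gamma(X)$, and the hypothesis that every other missing face $\tau_j$ satisfies $\tau_j\cup A=V$ and $\tau_j\cup B=V$ shows that $\{A,\tau_j\}$ and $\{B,\tau_j\}$ also fail to lie in $\Gamma(X)$. Since $A$ and $B$ are proper subsets of $V$ (note $|A|=|B|=d+1<N$ as $c<d+1$), the singletons $\{A\}$ and $\{B\}$ are indeed present in $\Gamma(X)$ but have no larger faces attached to them. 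Hence they lie in two distinct connected components of $\Gamma(X)$, which gives $\dim\tilde{H}_0(\Gamma(X))\geq 1$.

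Combining the two steps yields $\tilde{H}_{N-3}(X)\neq 0$ with $N-3\geq d$, which is the desired conclusion with $k=N-3$. To apply Theorem~\ref{thm:alexander_nerve} one needs $X$ to not be the complete complex on $V$, but this is immediate since $A$ is a missing face of $X$. The main (minor) obstacle is the index book-keeping: one must notice that the strict inequality $|A\cap B|<d$ is precisely what lines up with the duality shift $N-k-3$ to push the nontrivial class into the required range $k\geq d$. Everything else is a direct unpacking of the definition of $\Gamma(X)$ together with the standard identification of $\tilde{H}_0$ with the number of connected components.
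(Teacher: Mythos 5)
Your proposal is correct and follows essentially the same route as the paper: apply Theorem~\ref{thm:alexander_nerve} after observing that $A$ and $B$ are isolated vertices of $\Gamma(X)$, hence $\tilde{H}_0(\Gamma(X))\neq 0$, and then check that the dual degree $|V|-3=2(d+1)-|A\cap B|-3\geq d$ precisely because $|A\cap B|<d$. Your added remarks (that $X$ is not the complete complex, and that $\{A\},\{B\}$ are genuine vertices of $\Gamma(X)$) are correct and only make explicit what the paper leaves implicit.
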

\begin{proof}
Let $\mathcal{M}$ be the set of missing faces of $X$. Let $\Gamma(X)$ be the simplicial complex
\[
    \Gamma(X)= \left\{\mathcal{N}\subset \mathcal{M}: \, \bigcup_{\tau\in\mathcal{N}}\tau \neq V\right\}.
\]
By assumption, $A\cup B=V$, and for any missing face $\tau\in\mathcal{M}\setminus\{A,B\}$, $A\cup\tau = V$ and $B\cup \tau=V$. Therefore, both $A$ and $B$ are isolated vertices of the complex $\Gamma(X)$. In particular, $\Gamma(X)$ is disconnected. That is, 
\[
    \tilde{H}_0(\Gamma(X))\neq 0.
\]
By Theorem \ref{thm:alexander_nerve}, we have
\[
    \tilde{H}_{|V|-3}(X)= \tilde{H}_0(\Gamma(X))\neq 0.
\]
Since $|A\cap B|<d$, we have
\[
    |V|-3 = |A|+|B|-|A\cap B|-3 \geq 2(d+1)-(d-1)-3 = d.
\]
\end{proof}

\begin{proof}[Proof of Theorem \ref{prop:lower_bound}]
Assume we can write $X$ as
\[
X=\cap_{i=1}^s X_i,
\]
where, for all $i\in [s]$, $X_i$ is a $d$-Leray complex. For each $i\in[s]$, let $\mathcal{M}_i$ be the set of missing faces of $X_i$.

By Proposition \ref{prop:helly_num_of_intersection},
$\mathcal{M}$ is the set of inclusion minimal elements in $\cup_{i=1}^s \mathcal{M}_i$. Since all the elements of $\mathcal{M}$ are of size $d+1$, and all the elements of $\mathcal{M}_i$ are of size at most $d+1$ (since, by Lemma \ref{lemma:helly_for_leray}, the missing faces of a $d$-Leray complex are of dimension at most $d$), 
we must in fact have
\[
    \mathcal{M}= \cup_{i=1}^s \mathcal{M}_i.
\]
(Otherwise, assume there exists some $\tau\in \cup_{i=1}^s  \mathcal{M}_i \setminus \mathcal{M}$. Then, there is some $\eta\in\mathcal{M}$ such that $\eta\subsetneq \tau$. But since all the elements of $\mathcal{M}$ are of size $d+1$, we obtain $|\tau|>d+1$, a contradiction).

Assume for contradiction that $s<|\mathcal{M}|$. Then, by the pigeonhole principle, there exist two distinct sets $\tau_1,\tau_2\in\mathcal{M}$ such that $\tau_1$ and $\tau_2$ are both missing faces of $X_i$ for some $i\in[s]$. Let $\tau_1$ and $\tau_2$ be such a pair with intersection $\tau_1\cap \tau_2$ of maximal size.

Let us look at the induced subcomplex
\[
    Y=X_i[\tau_1\cup\tau_2].
\]
Note that $\tau_1$ and $\tau_2$ are missing faces of $Y$. 
Let $\tau\neq \tau_1,\tau_2$ be a missing face of $Y$. That is, $\tau$ is a missing face of $X_i$ that is contained in $\tau_1\cup\tau_2$.

Let  $k=|\tau_1\cap \tau_2|$, $t=|\tau_1\cap \tau_2 \cap \tau|$, $t_1=|\tau \setminus \tau_2|$ and $t_2=|\tau\setminus \tau_1|$.  Since $\tau\in \mathcal{M}_i\subset \mathcal{M}$, we obtain, by the maximality of $|\tau_1\cap \tau_2|$,
\[
    t_1+t = |\tau\cap \tau_1 | \leq k
\]
and
\[
    t_2+t = |\tau\cap \tau_2 | \leq k.
\]
We obtain
\[
    d+1= |\tau| =t_1+t_2+t \leq 2k-t.
\]
That is,
\[
    t\leq 2k-d-1.
\]
Hence,
\[
    |\tau\setminus(\tau_1\cap \tau_2)|=t_1+t_2 = d+1-t \geq d+1-2k+d+1 = 2(d+1-k).
\]
So, $\tau\setminus(\tau_1\cap \tau_2)$ is a subset of size $t_1+t_2\geq 2(d+1-k)$ of the set $(\tau_1\cup\tau_2)\setminus (\tau_1\cap \tau_2)$. But $|(\tau_1\cup\tau_2)\setminus (\tau_1\cap \tau_2)|=2(d+1-k)$. Therefore, $\tau\setminus (\tau_1\cap \tau_2) = (\tau_1\cup \tau_2)\setminus (\tau_1\cap \tau_2)$.
Hence, we have
\[
    \tau\cup \tau_1 = (\tau\setminus(\tau_1\cap\tau_2))\cup \tau_1 = ((\tau_1\cup \tau_2)\setminus(\tau_1\cap \tau_2))\cup \tau_1 = \tau_1\cup \tau_2,
\]
and similarly
\[
    \tau\cup \tau_2 = (\tau\setminus(\tau_1\cap\tau_2))\cup \tau_2 = ((\tau_1\cup \tau_2)\setminus(\tau_1\cap \tau_2))\cup \tau_2 = \tau_1\cup \tau_2.
\]
Moreover, since $\mathcal{M}$ forms a partial Steiner $(d,d+1,n)$-system, we have $|\tau_1\cap \tau_2|<d$. So, by Lemma \ref{lemma:obstruction_to_d_leray}, $\tilde{H}_r(Y)\neq 0$ for some $r\geq d$. But this is a contradiction to the fact that $X_i$ is $d$-Leray.

Since any $d$-representable complex is $d$-Leray, we obtain:
\[
    \boxx_d(X)\geq |\mathcal{M}|.
\]
On the other hand, it is easy to show that $\boxx_d(X)\leq |\mathcal{M}|$: Let $V$ be the vertex set of $X$. For each $\tau\in\mathcal{M}$, let $X_{\tau}$ be the simplicial complex on vertex set $V$ whose only missing face is $\tau$. It is easy to check that the complex $X_{\tau}$ is $d$-representable (for example, we may assign to each vertex in $\tau$ one of the facets of a simplex $P$ in $\Rea^d$, and assign to all of the vertices in $V\setminus \tau$ the simplex $P$ itself).  Since $X=\cap_{\tau\in\mathcal{M}} X_{\tau}$, we obtain $\boxx_d(X)\leq |\mathcal{M}|$.
\end{proof}

\section{Upper bounds on representability}\label{sec:upper}

%\todo[inline]{Maybe add short proofs for statements in introduction that were not proved- like, $N(boxes)=X(G_{boxes})$}

In this section we prove Theorem \ref{thm:main_cor_intro_version}.
We will need the following simple lemma:

\begin{lemma}\label{lemma:face_removal}
Let $P\subset \Rea^d$ be a convex polytope. Let $F_1,\ldots,F_m$ be faces of $P$, and let $p_1,\ldots,p_k$ be points in $P$ such that $p_i\notin  F_j$ for all $i\in[k]$ and $j\in[m]$. Then, there exists a convex polytope $P'\subset P$ such that $P'\cap F_j=\emptyset$ for all $j\in[m]$, and $p_i\in P'$ for all $i\in[k]$.
\end{lemma}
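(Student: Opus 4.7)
The plan is to shrink $P$ inward away from each $F_j$ using supporting hyperplanes. First I would invoke the standard characterization of faces of a convex polytope: for each proper face $F_j$ there is a linear functional $\ell_j$ on $\Rea^d$ and a constant $c_j$ such that $\ell_j(x)\leq c_j$ for every $x\in P$, with equality exactly on $F_j$, i.e.\ $F_j=\{x\in P:\ell_j(x)=c_j\}$ and $c_j=\max_{x\in P}\ell_j(x)$. The degenerate cases are easy to handle at the outset: $F_j=\emptyset$ imposes no constraint at all, and $F_j=P$ is ruled out by the hypothesis that at least one $p_i$ exists and lies outside $F_j$ (if $k=0$ too, take $P'$ to be any single point of $P$ not in $\cup_j F_j$, which is nonempty since $P$ is full-dimensional relative to its affine hull and the $F_j$ are proper faces).

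Next, since each $p_i$ lies in $P\setminus F_j$, the maximality of $c_j$ on $P$ gives the \emph{strict} inequality $\ell_j(p_i)<c_j$ for every pair $(i,j)$. Because both index sets $[k]$ and $[m]$ are finite, I can choose a single positive slack $\epsilon_j>0$ satisfying $\epsilon_j<c_j-\ell_j(p_i)$ for every $i\in[k]$.

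I would then define
\[
P':=\bigl\{x\in P:\ell_j(x)\leq c_j-\epsilon_j\ \text{for every } j\in[m]\bigr\}.
\]
This is the intersection of the polytope $P$ with $m$ closed half-spaces, hence a convex polytope contained in $P$. By the choice of $\epsilon_j$, each $p_i$ satisfies $\ell_j(p_i)\leq c_j-\epsilon_j$ for every $j$, so $p_i\in P'$. On the other hand, every $x\in F_j$ has $\ell_j(x)=c_j>c_j-\epsilon_j$, so $x\notin P'$; hence $P'\cap F_j=\emptyset$.

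I do not expect a serious obstacle here; the only step that warrants care is invoking the supporting-hyperplane presentation of an arbitrary face uniformly in $j$ and correctly dismissing the boundary cases $F_j=P$ and $F_j=\emptyset$. Once the linear functionals $\ell_j$ are in hand, the rest of the proof is a single finite choice of slack parameters, followed by a direct verification of the two required properties of $P'$.
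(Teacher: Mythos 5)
Your proof is correct and rests on the same idea as the paper's: each face $F_j$ lies on a supporting hyperplane of $P$, and every $p_i\in P\setminus F_j$ satisfies the corresponding inequality \emph{strictly}, which lets you separate the $p_i$ from $F_j$. The only (immaterial) difference is that you take $P'$ to be $P$ truncated by slightly shifted half-spaces, whereas the paper takes $P'=\operatorname{conv}(\{p_1,\ldots,p_k\})$ and observes it lies in the open side of each supporting hyperplane.
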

\begin{proof}
Let $P'=\text{conv}(\{p_1,\ldots,p_k\})$. Let $j\in[m]$, and let $H$ be a hyperplane supporting $F_j$. That is, $H\cap P=F_j$, and $P$ is contained in one of the closed half-spaces $H^+$ defined by $H$.

Now, since the points $p_1,\ldots,p_k$ belong to $P\setminus F_j$, they must all lie in the interior of $H^+$. Therefore, their convex hull $P'$ is also contained in the interior of $H^+$. Since $F_j$ lies on the boundary $H$ of $H^+$, we have $P'\cap F_j=\emptyset$, as wanted.
\end{proof}

\begin{theorem}
\label{prop:dom_by_one}
Let $X$ be a simplicial complex on vertex set $V$. Let $U\subset V$ such that $U\notin X$ and for any missing face $\tau$ of $X$, $|\tau\setminus U|\leq 1$. Then, $X$ is $(|U|-1)$-representable.
\end{theorem}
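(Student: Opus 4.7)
The plan is to realize $X$ as the nerve of a family of convex polytopes inside a fixed $d$-simplex $P\subseteq \Rea^d$ with $d=|U|-1$, using the $|U|$ facets of $P$ as canonical hosts for the vertices in $U$. Concretely, I take $P=\mathrm{conv}\{a_u:u\in U\}$, let $F_u$ denote the facet opposite $a_u$, and for $\sigma\subseteq U$ set $G_\sigma:=\bigcap_{u\in\sigma}F_u=\mathrm{conv}\{a_{u'}:u'\in U\setminus\sigma\}$, a face of $P$ which is nonempty precisely when $\sigma\subsetneq U$. Because $U\notin X$, for every $\sigma\in X$ with $\sigma\subseteq U$ the face $G_\sigma$ is nonempty, so I can fix a point $q_\sigma$ in the relative interior of $G_\sigma$ (and a point $q_\emptyset$ in the interior of $P$).

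For each $w\in V$ I would then produce $C_w$ by a single application of Lemma \ref{lemma:face_removal}. For $u\in U$, apply the lemma inside the polytope $F_u$, with required points $\{q_\sigma:\sigma\in X,\ \sigma\subseteq U,\ u\in\sigma\}$ (each of which lies in $F_u$ since $u\in\sigma$) and forbidden faces $\{G_\tau:\tau\subseteq U\text{ a missing face of }X,\ u\in\tau\}$ (each a face of $F_u$); the required-point/forbidden-face compatibility $q_\sigma\notin G_\tau$ holds because $\sigma\in X$ and $\tau$ missing force $\tau\not\subseteq\sigma$. The lemma then produces a convex polytope $C_u\subseteq F_u$ containing all required points and disjoint from all forbidden faces. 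For $v\in V\setminus U$, apply the lemma inside $P$ with required points $\{q_\sigma:\sigma\subseteq U,\ \sigma\cup\{v\}\in X\}$ and forbidden faces $\{G_{\tau'}:\tau'\cup\{v\}\text{ a missing face of }X\}$; again $\sigma\cup\{v\}\in X$ precludes $\tau'\subseteq\sigma$, so the lemma applies and yields $C_v\subseteq P$.

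Verification that $\{C_w\}_{w\in V}$ represents $X$ hinges on the combinatorial identity that, for any $\sigma\subseteq V$, $\sigma\in X$ iff $\sigma\cap U\in X$ and $(\sigma\cap U)\cup\{v\}\in X$ for every $v\in\sigma\setminus U$, which is immediate from $|\tau\setminus U|\leq 1$. If $\sigma\in X$, the single point $q_{\sigma\cap U}$ (or $q_\emptyset$ when $\sigma\cap U=\emptyset$) is a required point for every $C_w$ with $w\in\sigma$, so $\bigcap_{w\in\sigma}C_w\neq\emptyset$. If $\sigma\notin X$, pick a missing face $\tau\subseteq\sigma$: when $\tau\subseteq U$, the inclusions $C_u\subseteq F_u$ give $\bigcap_{u\in\tau}C_u\subseteq G_\tau$, and by construction every $C_u$ with $u\in\tau$ avoids $G_\tau$; when $\tau=\tau'\cup\{v\}$ with $v\notin U$, similarly $\bigcap_{u\in\tau'}C_u\subseteq G_{\tau'}$ while $C_v\cap G_{\tau'}=\emptyset$. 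Either way $\bigcap_{w\in\sigma}C_w=\emptyset$. The main obstacle is setting up the two parallel applications of Lemma \ref{lemma:face_removal} correctly, and in particular realising that $C_u$ must be built inside the subpolytope $F_u$ rather than inside $P$ so that the containment $C_u\subseteq F_u$ holds automatically; everything else is routine bookkeeping using the identity above.
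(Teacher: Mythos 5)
Your construction is exactly the paper's: the same simplex $P$ with facets indexed by $U$, the same choice of points in the relative interiors of the faces $F_\sigma$, the same two applications of Lemma \ref{lemma:face_removal} (inside $F_u$ for $u\in U$, inside $P$ for $v\in V\setminus U$), and the same two-case verification via a missing face $\tau$ with $|\tau\setminus U|\leq 1$. The only cosmetic difference is that you forbid only the faces indexed by missing faces rather than by all non-faces of $X$ contained in $U$, which changes nothing.
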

\begin{proof}
Let $d=|U|-1$. Let $P$ be a simplex in $\Rea^d$. Assign to each vertex $u\in U$ a facet $F_u$ of $P$. For $\sigma\subset U$, let
\[
    F_{\sigma}=\cap_{u\in\sigma} F_u
\]
(where we understand that $F_{\emptyset}=P$).
Note that, unless $\sigma=U$, $F_{\sigma}$ is a non-empty face of the simplex $P$. For $\sigma\subsetneq U$, let $p_{\sigma}$ be a point in the relative interior of $F_{\sigma}$. Then, for any $\tau\subset U$ and   $\sigma\subsetneq U$, $p_{\sigma} \in F_{\tau}$ if and only if $\tau\subset \sigma$.

Now we build a representation $\{F'_v\}_{v\in V}$ of $X$ in $\Rea^d$, as follows:

We divide into two cases:
\begin{enumerate}

\item Let $u\in U$. Let $\tau\subset U$ and $\sigma\subsetneq U$ such that $u\in \sigma\cap\tau$ and $\tau\notin X$, $\sigma\in X$. Note that $F_{\tau}$ is a face of $F_u$, and $p_{\sigma}\in F_u$. Also, since $X$ is a simplicial complex, we must have $\tau\not\subset \sigma$, and therefore $p_{\sigma}\notin F_{\tau}$. Hence, by Lemma \ref{lemma:face_removal}, there exists a convex polytope $F'_u\subset F_u$ such that $F'_u \cap F_{\tau} =\emptyset$ for all $\tau\subset U$ such that $u\in \tau$ and $\tau\notin X$, and $p_{\sigma}\in F'_u$ for all $\sigma\subsetneq U$ such that $u\in \sigma$ and $\sigma\in X$.

\item Let $v\in V\setminus U$. Let $\tau\subset U$ and $\sigma\subsetneq U$ such that $\tau\cup\{v\}\notin X$ and $\sigma\cup\{v\}\in X$. Since $X$ is a simplicial complex, we must have $\tau\not\subset\sigma$; hence, $p_{\sigma}\notin F_{\tau}$. Therefore, by Lemma \ref{lemma:face_removal}, there exists a convex polytope $F'_v\subset P$ such that $F'_v\cap F_{\tau}=\emptyset$ for all $\tau\subset U$ such that $\tau\cup\{v\}\notin X$ and $p_{\sigma}\in F'_v$ for all $\sigma\subsetneq U$ such that $\sigma\cup\{v\}\in X$.
\end{enumerate}
We will show that the family $\{F'_v\}_{v\in V}$ is a representation of $X$.

First, let $\sigma\in X$. Let $\sigma_1=\sigma\cap U$. Since $\sigma_1\in X$ and $U\notin X$, we have $\sigma_1\subsetneq U$. So, for any $u\in\sigma_1$, we have
\[
    p_{\sigma_1} \in F'_{u}.
\]
Moreover, for any $v\in\sigma\setminus \sigma_1$, since $\sigma_1\cup\{v\}\subset \sigma\in X$, we have
\[
    p_{\sigma_1} \in F'_{v}.
\]
Hence,
\[
 p_{\sigma_1}\in \cap_{v\in \sigma} F'_v.
\]
In particular, $\cap_{v\in\sigma} F'_v\neq \emptyset.$

Now, let $\sigma\subset V$ such that $\sigma\notin X$. Then, there exists some missing face $\tau$ of $X$ such that $\tau\subset \sigma$. By assumption, we have $|\tau\setminus U|\leq 1$.
We divide into two cases:
\begin{enumerate}
    \item Assume $\tau\subset U$. Then, on the one hand, we have
\[
    \cap_{u\in\tau} F'_u \subset \cap_{u\in \tau} F_u =F_{\tau}.
\]
On the other hand, for all $u\in \tau$, by the definition of $F'_u$, we have
\[
    F'_u\cap F_{\tau}=\emptyset.
\]
Hence,
\[
\cap_{u\in\tau} F'_u=\emptyset.
\]
\item Assume that $|\tau\setminus U|=1$. Let $w$ be the unique vertex in $\tau\setminus U$. Then,
\[
 \cap_{u\in \tau\setminus\{w\}} F'_u \subset \cap_{u\in \tau\setminus\{w\}} F_u= F_{\tau\setminus\{w\}}.
\]
But, since $(\tau\setminus \{w\}) \cup \{w\} = \tau\notin X$, we obtain, by the definition of $F'_w$,
\[
    F'_w\cap F_{\tau\setminus\{w\}} = \emptyset.
\]
Hence,
\[
\cap_{v\in \tau} F'_{v} = F'_{w} \cap \left(\cap_{u\in \tau\setminus\{w\}} F'_u\right) 
\subset F'_w \cap F_{\tau\setminus\{w\}} =\emptyset.
\]
\end{enumerate}
In both cases we obtain $\cap_{v\in\tau} F'_v=\emptyset$, and therefore
\[
    \cap_{v\in \sigma} F'_v \subset \cap_{v\in \tau} F'_v =\emptyset.
\]
So, $\{F'_v\}_{v\in V}$ is a representation of $X$ in $\Rea^d=\Rea^{|U|-1}$, as wanted.
\end{proof}

The proof of Theorem \ref{prop:dom_by_one} is based on ideas developed by Wegner in his thesis \cite{wegner1967eigenschaften} (as presented in \cite{eckhoff1993helly,tancer2013intersection}). Indeed, we can think of Theorem \ref{prop:dom_by_one} as an extension of the following result of Wegner:

\begin{theorem}[Wegner \cite{wegner1967eigenschaften}]\label{cor:rep_by_vertices}
Let $X$ be a simplicial complex with $n$ vertices. Then $X$ is $(n-1)$-representable. Moreover, if $X$ is not the complete $(n-2)$-dimensional skeleton, then it is $(n-2)$-representable.
\end{theorem}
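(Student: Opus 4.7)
The plan is to derive both assertions from Theorem \ref{prop:dom_by_one} by choosing the set $U\subset V$ appropriately, exploiting the observation that when $V\setminus U$ is small the hypothesis on missing faces becomes essentially automatic.

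For the first statement, I would split on whether $V\in X$. If $X=2^V$, then $X$ is trivially $0$-representable (map every vertex to the same point in $\Rea^0$) and so in particular $(n-1)$-representable. Otherwise $V\notin X$, and I would apply Theorem \ref{prop:dom_by_one} with $U=V$: since $V\setminus U=\emptyset$, the condition $|\tau\setminus U|\leq 1$ holds vacuously for every missing face $\tau$, so the theorem produces a representation in dimension $|V|-1=n-1$.

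For the moreover part, assume $X$ is not the complete $(n-2)$-dimensional skeleton. Again, if $X=2^V$, then $X$ is $0$-representable and therefore $(n-2)$-representable for $n\geq 2$. Otherwise $V\notin X$, and I claim some $(n-1)$-subset $U\subset V$ satisfies $U\notin X$: if every $(n-1)$-subset of $V$ were in $X$, then by downward closure $X$ would contain the complete $(n-2)$-dimensional skeleton, which combined with $V\notin X$ would force equality with that skeleton, contradicting the hypothesis. With such a $U$ fixed, the identity $|V\setminus U|=1$ makes the condition $|\tau\setminus U|\leq 1$ automatic for every $\tau\subset V$, and Theorem \ref{prop:dom_by_one} then provides a representation in dimension $|U|-1=n-2$.

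No individual step is substantive here — the real content is all packaged inside Theorem \ref{prop:dom_by_one}. The only bookkeeping is the case analysis on whether $V$ itself is a face of $X$ and, in the second statement, the short argument producing a codimension-one subset that lies outside $X$.
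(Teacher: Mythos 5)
Your proposal is correct and follows essentially the same route as the paper: handle the complete complex trivially, take $U=V$ for the first claim, and take a non-face $U$ of size $n-1$ (which exists precisely because $X$ is neither the complete complex nor the complete $(n-2)$-skeleton) for the second, then invoke Theorem \ref{prop:dom_by_one}. The only difference is that you spell out the existence of the codimension-one non-face slightly more explicitly than the paper does.
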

\begin{proof}
    If $X$ is the complete complex, then it is trivially $0$-representable. Otherwise, let $U=V$. Since $V\notin X$ and $|\tau\setminus V|=0\leq 1$ for any missing face $\tau$ of $X$, then by Theorem \ref{prop:dom_by_one}, $X$ is $(n-1)$-representable. If $X$ is not the complete $(n-2)$-dimensional skeleton, then there exists some $U\subset V$ of size $n-1$ such that $U\notin X$. Since $|V\setminus U|\leq 1$, then $|\tau\setminus U|\leq 1$ for any missing face $\tau$ of $X$. Hence, by Theorem \ref{prop:dom_by_one}, $X$ is $(n-2)$-representable.
\end{proof}

\begin{proof}[Proof of Theorem \ref{thm:main_cor_intro_version}]
For $i\in[k]$, let $\mathcal{M}_i$ be the set consisting of all the missing faces $\tau$ of $X$ such that $|\tau\setminus V_i|\leq 1$.
Let
\[
    X_i=\{ \sigma\subset V :\, \tau\not\subset \sigma \text{ for all } \tau\in\mathcal{M}_i\}.
\]
Note that $X=\cap_{i=1}^k X_i$. Indeed, if $\sigma\in X$, then $\sigma$ does not contain any missing face of $X$; in particular, for all $i\in[k]$, $\sigma$ does not contain any $\tau\in\mathcal{M}_i$. Therefore, $\sigma\in \cap_{i=1}^k X_i$. On the other hand, if $\sigma\notin X$, then $\tau\subset \sigma$ for some missing face $\tau$ of $X$. By the assumption of the theorem, there exists some $i\in k$ such that $\tau\in\mathcal{M}_i$. So, $\sigma\notin X_i$, and therefore $\sigma\notin \cap_{i=1}^k X_i$.

  Let $i\in[k]$. The set of missing faces of $X_i$ is exactly $\mathcal{M}_i$. Moreover, since $V_i\notin X$, there is some missing face $\tau$ of $X$ such that $\tau\subset V_i$. Since $|\tau\setminus V_i|=0\leq 1$, we have $\tau\in\mathcal{M}_i$; therefore, $V_i\notin X_i$. So, by Theorem \ref{prop:dom_by_one}, $X_i$ is $(|V_i|-1)$-representable. 
  
  Finally, by Lemma \ref{lemma:intersection}, $X$ is $\left(\sum_{i=1}^k (|V_i|-1)\right)$-representable.
\end{proof}

\begin{remark}
In \cite[Theorem 1.2]{ha2014results}, an upper bound similar to the one in Theorem \ref{thm:main_cor_intro_version} is proved for the Leray number of a simplicial complex. Since $L(X)\leq \rep(X)$ for any complex $X$, we can see Theorem \ref{thm:main_cor_intro_version} as a generalization of that result.
\end{remark}

\section{Boxicity of complexes without large missing faces}\label{sec:boxd}

In this section we prove our main result, Theorem \ref{thm:boxd}.

First, we will need the following simple results about Steiner systems:

\begin{lemma}\label{lemma:steinersize}
Let $\mathcal{F}\subset 2^V$ be a partial $(d,d+1,n)$-Steiner system. Then
\[
    |\mathcal{F}|\leq \left\lfloor \frac{1}{d+1}\binom{n}{d}\right\rfloor.
\]
Moreover, if $|\mathcal{F}|=\frac{1}{d+1}\binom{n}{d}$, then $\mathcal{F}$ is a Steiner $(d,d+1,n)$-system.
\end{lemma}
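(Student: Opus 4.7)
The plan is to prove both parts with a single double counting argument. I count pairs $(S, F)$ where $F \in \mathcal{F}$, $S \subset V$, $|S| = d$, and $S \subset F$, in two different ways.

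Counting by fixing $F \in \mathcal{F}$ first: each $F$ has size $d+1$, so it contains exactly $\binom{d+1}{d} = d+1$ subsets of size $d$. Hence the total number of such pairs equals $(d+1)|\mathcal{F}|$. Counting by fixing $S$ first: by the partial Steiner $(d,d+1,n)$-system property, each $d$-subset $S$ of $V$ is contained in at most one $F \in \mathcal{F}$, so the total number of pairs is at most $\binom{n}{d}$. Combining the two counts gives
\[
(d+1)|\mathcal{F}| \leq \binom{n}{d},
\]
which, together with the fact that $|\mathcal{F}|$ is an integer, yields the desired bound $|\mathcal{F}| \leq \lfloor \frac{1}{d+1}\binom{n}{d}\rfloor$.

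For the equality part, suppose $|\mathcal{F}| = \frac{1}{d+1}\binom{n}{d}$. Then $(d+1)|\mathcal{F}| = \binom{n}{d}$, so equality holds in the above inequality. This forces every $d$-subset $S \subset V$ to be contained in \emph{exactly} one element of $\mathcal{F}$, which is precisely the definition of a Steiner $(d,d+1,n)$-system.

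There is no real obstacle here; the argument is a textbook double counting and the equality analysis is immediate from the tightness of the inequality. The only thing to be careful about is recording both counts in a form that makes the equality case transparent, rather than just citing the inequality.
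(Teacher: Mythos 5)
Your proof is correct and follows essentially the same double counting argument as the paper: both count $d$-subsets contained in elements of $\mathcal{F}$ to obtain $(d+1)|\mathcal{F}|\leq \binom{n}{d}$, and both deduce the equality case from tightness of that inequality. Nothing to add.
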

\begin{proof}
Since $\mathcal{F}$ is a partial Steiner $(d,d+1,n)$-system, then any subset of $V$ of size $d$ is contained in at most one element of $\mathcal{F}$. On the other hand, since each $\sigma\in \mathcal{F}$ contains exactly $d+1$ subsets of size $d$, we obtain
\begin{equation}\label{eq:1}
    (d+1)|\mathcal{F}| \leq \binom{n}{d}.
\end{equation}
Therefore, 
\[
    |\mathcal{F}|\leq \left\lfloor \frac{1}{d+1}\binom{n}{d}\right\rfloor.
\]
Now, assume that $|\mathcal{F}|=\frac{1}{d+1}\binom{n}{d}$. Then, equality must hold in \eqref{eq:1}. Thus, each subset of $V$ of size $d$ must be contained in exactly one set of $\mathcal{F}$. That is, $\mathcal{F}$ is a Steiner $(d,d+1,n)$-system.
\end{proof}

\begin{lemma}\label{lemma:claim0}
Let $\mathcal{F}\subset 2^V$ be a $(d,d+1,n)$-Steiner system.
Let $\tau\subset V$ of size $|\tau|\leq d+1$ such that $\tau$ is not contained in any set of $\mathcal{F}$. Then,
\[
    \{ \sigma\in\mathcal{F} : \, |\tau\setminus\sigma|=1 \} \geq d+1.
\]
\end{lemma}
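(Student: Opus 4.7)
The plan is to show that the hypothesis forces $|\tau|$ to equal exactly $d+1$, and then to produce the required $d+1$ sets by looking at the unique element of $\mathcal{F}$ containing each $d$-subset of $\tau$.

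First, I would dispose of the case $|\tau|\leq d$ as vacuous: if $|\tau|<d$, pick any $d$-subset $\tau'$ of $V$ containing $\tau$; by the Steiner property $\tau'$ lies in some $\sigma\in\mathcal{F}$, and then $\tau\subset\tau'\subset\sigma$, contradicting the hypothesis. If $|\tau|=d$, the Steiner property applied directly to $\tau$ yields a $\sigma\in\mathcal{F}$ with $\tau\subset\sigma$, again a contradiction. So $|\tau|=d+1$.

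Next, for each $d$-subset $\tau'\subset \tau$ (there are exactly $d+1$ of them), let $\sigma_{\tau'}\in\mathcal{F}$ be the unique element of the Steiner system containing $\tau'$. Since $|\sigma_{\tau'}|=d+1$ and $\tau'\subset\sigma_{\tau'}$, we can write $\sigma_{\tau'}=\tau'\cup\{v\}$ for a single vertex $v$. If $v$ were to lie in $\tau\setminus\tau'$ (a one-element set), then $\sigma_{\tau'}=\tau$, contradicting the hypothesis that $\tau$ is not contained in any member of $\mathcal{F}$. Hence $v\in V\setminus\tau$, which gives $\sigma_{\tau'}\cap\tau=\tau'$ and therefore $|\tau\setminus\sigma_{\tau'}|=|\tau\setminus\tau'|=1$.

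Finally, I would verify that the $d+1$ sets $\sigma_{\tau'}$ are pairwise distinct. Suppose $\sigma_{\tau'_1}=\sigma_{\tau'_2}$ for two distinct $d$-subsets $\tau'_1,\tau'_2$ of $\tau$; then this common set contains $\tau'_1\cup\tau'_2$, which has at least $d+1$ elements and lies inside $\tau$. Since $|\sigma_{\tau'_1}|=d+1=|\tau|$, we would get $\sigma_{\tau'_1}=\tau$, again contradicting the hypothesis. Thus we exhibit $d+1$ distinct elements of $\mathcal{F}$ each at Hamming distance $1$ from $\tau$, as required. The only mildly subtle point is the distinctness check, but it follows immediately from a cardinality comparison, so I do not expect any serious obstacle in this proof.
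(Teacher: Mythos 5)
Your proof is correct and follows essentially the same route as the paper's: force $|\tau|=d+1$ using the Steiner property, assign to each of the $d+1$ subsets of $\tau$ of size $d$ its unique containing block, and check distinctness via the observation that a common block would have to equal $\tau$ itself. Your write-up is slightly more explicit than the paper's (in particular in verifying $|\tau\setminus\sigma_{\tau'}|=1$ rather than $0$), but there is no substantive difference.
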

\begin{proof}
Since $\mathcal{F}$ forms a Steiner $(d,d+1,n)$-system, then any set of size at most $d$ is contained in at least one set of $\mathcal{F}$. Therefore, we must have $|\tau|=d+1$. Now, let $\tau_1,\ldots,\tau_{d+1}$ be the subsets of $\tau$ of size $d$. Again, since $\mathcal{F}$ is a Steiner system, there exists $\sigma_1,\ldots,\sigma_{d+1}\in \mathcal{F}$ such that $\tau_i\subset\sigma_i$ for all $i\in[d+1]$.

Since $\tau$ is the only set of size $d+1$ containing two or more of the sets $\tau_1,\ldots,\tau_{d+1}$, but $\tau\notin \mathcal{F}$, we must have  $\sigma_i\neq \sigma_j$ for all $i\neq j$. Thus,
\[
|\{\sigma\in\mathcal{F}:\, |\tau\setminus\sigma|=1\}|\geq |\{\sigma_1,\ldots,\sigma_{d+1}\}|=d+1.
\]
\end{proof}

%In order to apply Theorem \ref{thm:main_cor_intro_version}, we will need the following result:

The last ingredient needed for the proof of Theorem \ref{thm:boxd} is the following result:

\begin{proposition}\label{prop:domination_number}
Let $X$ be a simplicial complex on vertex set $V$ of size $n$, satisfying $h(X)\leq d$. 
Let $t$ be the minimum size of a family $\{\sigma_1,\ldots,\sigma_t\}$ of subsets of size $d+1$ of $V$ satisfying $\sigma_i\notin X$ for all $i\in[t]$, such that for any missing face $\tau$ of $X$, there exists some $i\in[t]$ such that $|\tau\setminus\sigma_i|\leq 1$. 
Then,
\[
    t\leq \left\lfloor\frac{1}{d+1}\binom{n}{d}\right\rfloor.
\]
Moreover, if $h(X)=d\geq 2$, then $t=\frac{1}{d+1}\binom{n}{d}$ if and only if the set of missing faces of $X$ forms a Steiner $(d,d+1,n)$-system.
\end{proposition}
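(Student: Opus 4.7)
The plan is a greedy ``packing'' construction for the upper bound, followed by a swap argument based on Lemma \ref{lemma:claim0} for the equality case.

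For the upper bound, I build $\sigma_1,\ldots,\sigma_t$ iteratively: while some missing face $\tau$ remains uncovered, pick any $(d+1)$-set $\sigma \supseteq \tau$ (taking $\sigma=\tau$ when $|\tau|=d+1$), noting that $\sigma\notin X$ is automatic since $\tau\notin X$. Mark as covered every $\tau'$ with $|\tau'\setminus\sigma|\leq 1$. The crucial observation is that $\sigma$ is automatically in packing with each earlier $\sigma_j$: since $\tau$ is uncovered by $\sigma_j$ we have $|\tau\cap\sigma_j|\leq|\tau|-2$, which yields
\[
|\sigma\cap\sigma_j| \leq |\tau\cap\sigma_j|+|\sigma\setminus\tau| \leq (|\tau|-2)+(d+1-|\tau|) = d-1.
\]
Therefore the $d$-subsets $\binom{\sigma_i}{d}$ are pairwise disjoint in $\binom{V}{d}$, giving $t(d+1)\leq\binom{n}{d}$, so $t\leq \lfloor\binom{n}{d}/(d+1)\rfloor$.

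The ``if'' direction of the equality is immediate: if $\mathcal{M}$ is a Steiner $(d,d+1,n)$-system then every missing face has size $d+1$, every $(d+1)$-set not in $X$ is itself in $\mathcal{M}$, and no two missing faces share $d$ vertices, so each $\sigma_i$ dominates only itself and $t=|\mathcal{M}|=\binom{n}{d}/(d+1)$.

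For the ``only if'' direction, assume $t=\binom{n}{d}/(d+1)$ and $h(X)=d\geq 2$. Tightness of the greedy bound forces the greedy family $\mathcal{F}=\{\sigma_i\}$ to have its $d$-subsets partition $\binom{V}{d}$ exactly, i.e.\ $\mathcal{F}$ is itself a Steiner $(d,d+1,n)$-system. Suppose for contradiction $\mathcal{M}$ is not Steiner. If all missing faces have size $d+1$ and $\mathcal{M}$ is a partial (but not complete) Steiner system then Lemma \ref{lemma:steinersize} gives $|\mathcal{M}|<\binom{n}{d}/(d+1)$, while in this subcase every $\sigma_i$ must be in $\mathcal{M}$ and dominates only itself, forcing $t=|\mathcal{M}|<\binom{n}{d}/(d+1)$, a contradiction. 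Otherwise one produces an ``extra'' missing face $\tau \in \mathcal{M}\setminus\mathcal{F}$ of size $d+1$ (either directly present, in the ``not-partial-Steiner'' subcase, or forced to exist by a Steiner-block counting argument in the ``small missing face'' subcase). Lemma \ref{lemma:claim0} applied to $\tau$ shows it is adjacent (meaning $|\tau\setminus\sigma|=1$) to at least $d+1\geq 3$ members of $\mathcal{F}$. Pick any two such members $\sigma_{k_0},\sigma_{k_1}$ and form $\mathcal{F}'=(\mathcal{F}\setminus\{\sigma_{k_0},\sigma_{k_1}\})\cup\{\tau\}$, of size $t-1$. I verify $\mathcal{F}'$ still dominates $\mathcal{M}$: both $\sigma_{k_0}$ and $\sigma_{k_1}$ are now dominated by $\tau$; remaining $\sigma_i$ dominate themselves; and every other extra missing face $\tau'$ was adjacent to at least $d+1$ members of $\mathcal{F}$, hence to at least $d-1\geq 1$ after the removal, so it remains dominated. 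This contradicts the minimality of $t$.

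The main obstacle is handling the subcase where some missing face has size strictly less than $d+1$ and no $(d+1)$-missing face is ``extra'': Lemma \ref{lemma:claim0} cannot be applied to a small missing face directly (it is contained in the Steiner block $\sigma_j$ coming from the $d$-subset covering), so one must either force an extra $(d+1)$-missing face by combining $h(X)=d$ with the Steiner structure on $\mathcal{F}$, or remove such a $\sigma_j$ outright and verify coverage through the many other Steiner blocks at Hamming distance $1$ from the small missing face (which exist in abundance since $d\geq 2$).
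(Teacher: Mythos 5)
Your upper bound argument is correct and essentially the paper's: your greedy family is a dominating partial Steiner $(d,d+1,n)$-subsystem of the collection of $(d+1)$-sets outside $X$, which is exactly what the paper obtains by taking a maximal such subsystem, and Lemma \ref{lemma:steinersize} finishes. Your ``if'' direction of the equality case is also fine. The genuine gap is in the ``only if'' direction, precisely in the subcase you yourself flag as ``the main obstacle''. Your swap is restricted to \emph{missing faces} of size $d+1$ lying outside $\mathcal{F}$, and in the presence of a missing face $\tau_0$ of size at most $d$ no such face need exist: every $(d+1)$-set containing $\tau_0$ properly contains a missing face and hence is \emph{not} itself a missing face, so the ``Steiner-block counting argument'' you invoke can only produce an extra $(d+1)$-set not in $X$, never an extra $(d+1)$-\emph{missing face}. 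Your alternative fix (delete the single block containing a $d$-superset of $\tau_0$) can be made to work, but the verification you defer is exactly the missing content: one must check that no missing face was dominated \emph{only} by the deleted block. Moreover, even when an extra $(d+1)$-missing face does exist, your two-block removal never addresses missing faces of size at most $d$ whose only dominators among $\mathcal{F}$ were the two removed blocks.

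The paper avoids all of this by running the swap on the set of \emph{all} $(d+1)$-subsets not in $X$ rather than on missing faces: Lemma \ref{lemma:claim0} applies to any $(d+1)$-set not contained in a block, whether or not it is a missing face, and the swapped-in set $\tilde\tau$ only needs to satisfy $\tilde\tau\notin X$ to be admissible in a dominating family. The domination check then has a clean, exhaustive trichotomy: a missing face $\tau$ contained in no block has at least $d+1\ge 3$ blocks at distance one (so survives the removal of two); a missing face contained in a surviving block is dominated by it; and a missing face contained in a removed block $\sigma_i$ satisfies $|\tau\setminus\tilde\tau|\le|\sigma_i\setminus\tilde\tau|=1$, so is dominated by $\tilde\tau$ --- this last observation is what your sketch lacks, and it is what rescues the small missing faces, since every set of size at most $d$ is automatically contained in some block. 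Having concluded that every $(d+1)$-set not in $X$ is a block, the nonexistence of small missing faces becomes a one-line count: a $d$-set $\eta$ containing a small missing face lies in $n-d\ge 2$ sets of size $d+1$, all outside $X$, hence all blocks, contradicting the Steiner property. (You also need to dispose of the case $n=d+1$ separately, as the paper does.)
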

\begin{proof}
Let $\mathcal{M}$ be the collection of all subsets of $V$ of size $d+1$ that are not simplices of $X$.

Let $\mathcal{A}\subset\mathcal{M}$ be a maximal (with respect to inclusion) partial Steiner $(d,d+1,n)$-system. By Lemma \ref{lemma:steinersize}, we have
\[
    |\mathcal{A}|\leq \left\lfloor \frac{1}{d+1}\binom{n}{d}\right\rfloor.
\]
%Now, let $\eta\subset V$ of size $|\eta|=d$.
We will show that for any missing face $\tau$ of $X$, there exists some $\sigma\in\mathcal{A}$ such that $|\tau\setminus\sigma|\leq 1$. Assume for contradiction that there exists some missing face $\tau$ of $X$ such that $|\tau\setminus\sigma|>1$ for all $\sigma\in \mathcal{A}$. Let $\sigma_0$ be some set in $\mathcal{M}$ containing $\tau$. Then 
$
    |\sigma_0\setminus \sigma|> |\tau\setminus\sigma|>1
$
for all $\sigma\in\mathcal{A}$. Let $\mathcal{A}'=\mathcal{A}\cup\{\sigma_0\}$. Let $\eta\subset V$ of size $|\eta|=d$. If $\eta\not\subset \sigma_0$, then, since $\mathcal{A}$ is a partial Steiner $(d,d+1,n)$-system, $\eta$ is contained in at most one set in $\mathcal{A}'$. If $\eta\subset \sigma_0$, then assume for contradiction that $\eta\subset \sigma$ for some $\sigma\in\mathcal{A}$. Since $|\sigma_0\setminus \sigma|>1$, we have $|\sigma_0\cap\sigma|\leq d-1$. But this is a contradiction to the fact that $\eta$ is a set of size $d$ contained in $\sigma_0\cap \sigma$. So, $\eta$ is not contained in any set of $\mathcal{A}$. In both cases, $\eta$ is contained in at most one set of $\mathcal{A}'$. Therefore, $\mathcal{A}'\subset\mathcal{M}$ is a partial Steiner $(d,d+1,n)$-system. But this is a contradiction to the maximality of $\mathcal{A}$. 

Therefore, for any missing face $\tau$ of $X$ there exists some $\sigma\in\mathcal{A}$ such that $|\tau\setminus \sigma|\leq 1$.
Hence, \[t\leq |\mathcal{A}|\leq \left\lfloor\frac{1}{d+1}\binom{n}{d}\right\rfloor.\]

Now, assume $t=\frac{1}{d+1}\binom{n}{d}$. Then, we must have  $|\mathcal{A}|=t=\frac{1}{d+1}\binom{n}{d}$. By Lemma \ref{lemma:steinersize}, $\mathcal{A}$ is a Steiner $(d,d+1,n)$-system.

Assume that $h(X)=d\geq 2$. We will show that $\mathcal{A}$ is exactly the set of missing faces of $X$:

 We may assume that $n\geq d+2$. Otherwise, since $h(X)=d$, $X$ must contain a unique missing face of size $d+1$ (that is, $X$ is a Steiner $(d,d+1,d+1)$-system). 

First, we will show that $\mathcal{A}=\mathcal{M}$. Assume for contradiction that there exists some $\tilde{\tau}\in \mathcal{M}\setminus\mathcal{A}$.
By Lemma \ref{lemma:claim0}, there exist $\sigma_1,\sigma_2\in\mathcal{A}$ such that $|\tilde{\tau}\setminus \sigma_1|=|\tilde{\tau}\setminus\sigma_2|=1$. Since $|\tilde{\tau}|=d+1$, we also have $|\sigma_1\setminus\tilde{\tau}|=|\sigma_2\setminus\tilde{\tau}|=1$. Let
\[
    \mathcal{A}'=\mathcal{A}\cup \{\tilde{\tau}\}\setminus\{\sigma_1,\sigma_2\}.
\]

Let $\tau$ be a missing face of $X$. We will show that there exists some $\sigma\in\mathcal{A}'$ such that $|\tau\setminus \sigma|\leq 1$.
We divide into the following cases:
\begin{enumerate}
    \item If $\tau$ is not contained in any set of $\mathcal{A}$, then, by Lemma \ref{lemma:claim0}, we have
\begin{multline*}
    |\{\sigma\in\mathcal{A}':\, |\tau\setminus\sigma|=1\}| \geq |\{\sigma\in\mathcal{A}:\, |\tau\setminus\sigma|=1\}-2 \\
    \geq d+1-2=d-1\geq 1.
\end{multline*}
Therefore, there exists some $\sigma\in\mathcal{A}'$ such that $|\tau\setminus\sigma|=1$.
\item If $\tau$ is contained in some $\sigma\in\mathcal{A}\setminus\{\sigma_1,\sigma_1\}\subset \mathcal{A}'$, then $|\tau\setminus\sigma|=0\leq 1$.
\item If $\tau$ is contained in $\sigma_i$ for some $i\in\{1,2\}$, then
\[
    |\tau\setminus \tilde{\tau}| \leq |\sigma_i\setminus\tilde{\tau}|=1.
\]
\end{enumerate}
Since $|\mathcal{A}'|=t-1$, this is a contradiction to the minimality of $t$. Hence, we must have $\mathcal{A}=\mathcal{M}$.

Finally, assume for contradiction that there exists some missing face $\tau$ of $X$ of size $|\tau|\leq d$. Let $\eta$ be a set of size $d$ containing $\tau$. Then
, since we assumed $n\geq d+2$, we have
\[
 |\{\sigma\subset V:\, |\sigma|=d+1,\,\eta\subset\sigma\}|
 = n-d\geq 2.
\]
Note that any $\sigma\subset V$ such that $|\sigma|=d+1$ and $\eta\subset \sigma$ is not a simplex of $X$ (since it contains the missing face $\tau$), and therefore belongs to $\mathcal{M}=\mathcal{A}$. Hence, $\eta$ is contained in at least two sets of $\mathcal{A}$, a contradiction to $\mathcal{A}$ being a Steiner $(d,d+1,n)$-system. Thus, the set of missing faces of $X$ is exactly $\mathcal{A}$.
\end{proof}

\begin{proof}[Proof of Theorem \ref{thm:boxd}]

Let $\{V_1,\ldots,V_t\}$ be a family of minimum size of subsets of size $d+1$ of $V$ such that $V_i\notin X$ for all $i\in[t]$, and such that for any missing face $\tau$ of $X$, there exists some $i\in[t]$ satisfying $|\tau\setminus V_i|\leq 1$. 
By Theorem \ref{thm:main_cor_intro_version}, we have
$
    \boxx_d(X)\leq t.
$
So, by Proposition \ref{prop:domination_number}, we obtain
\[
    \boxx_d(X)\leq t \leq \left\lfloor\frac{1}{d+1}\binom{n}{d}\right\rfloor.
\]
Now, assume that $h(X)=d$, and the set of missing faces of $X$ does not form a Steiner $(d,d+1,n)$-system. If $d=1$, then it is proved in \cite{witsenhausen1980intersections} that $\boxx_1(X)< \frac{n}{2}$.
If $d\geq 2$ then, by Proposition \ref{prop:domination_number}, we have
\[
    t <\frac{1}{d+1}\binom{n}{d},
\]
and therefore
\[
    \boxx_d(X)\leq t<\frac{1}{d+1}\binom{n}{d}.
\]

Finally, assume that the missing faces of $X$ form a Steiner $(d,d+1,n)$-system $\mathcal{M}$. Then, by Theorem \ref{prop:lower_bound}, we have
\[
    \boxx_d(X)= |\mathcal{M}|= \frac{1}{d+1}\binom{n}{d},
\]
as wanted.
\end{proof}

\begin{remark}
In the case $d=1$, the proof of the upper bound in Theorem \ref{thm:boxd} reduces to the proof of Theorem \ref{thm:roberts} presented by Cozzens and Roberts in \cite{cozzens1983computing}.
\end{remark}

\section{Concluding Remarks}\label{sec:conc}

Let $X$ be a simplicial complex.  By Lemma \ref{lemma:intersection}, we have for any $d\geq 1$,
\[
   \rep(X)\leq d\cdot \boxx_d(X).
\]

In particular, for $d=1$, we obtain as a corollary of Theorem \ref{thm:roberts}:
\begin{proposition}
\label{prop:roberts_rep_version}
Let $G$ be a graph with $n$ vertices. Then, \[
\rep(X(G))\leq \left\lfloor\frac{ n}{2}\right\rfloor.
\]
Moreover, $\rep(X(G))=\frac{n}{2}$ if and only if $G$ is the complete $\frac{n}{2}$-partite graph with all sides of size $2$.
\end{proposition}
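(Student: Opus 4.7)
The plan is to combine the inequality $\rep(X) \leq \boxx_1(X)$ (the $d=1$ instance of Lemma \ref{lemma:intersection}) with the standard bound $L(X) \leq \rep(X)$, and then to feed both halves of Roberts' theorem (Theorem \ref{thm:roberts}) into the argument.

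For the upper bound, Lemma \ref{lemma:intersection} with all $d_i=1$ tells me that writing $X(G)$ as the intersection of $k$ interval-graph clique complexes makes it $k$-representable, so
\[
    \rep(X(G)) \leq \boxx_1(X(G)) = \boxx(G) \leq \lfrac{n}{2},
\]
the final inequality being Theorem \ref{thm:roberts}. This same chain immediately yields the ``only if'' half of the equality statement: if $\rep(X(G)) = n/2$ then it forces $\boxx(G) = n/2$, and the equality case of Theorem \ref{thm:roberts} identifies $G$ as the complete $n/2$-partite graph with sides of size $2$.

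The substantive step is the ``if'' direction, where I would show $\rep(X(G)) \geq n/2$ when $G = K_{2,2,\ldots,2}$ via $\rep(X(G)) \geq L(X(G))$ together with an Alexander-duality homology computation (Theorem \ref{thm:alexander_nerve}). The set $\mathcal{M}$ of missing faces of $X(G)$ consists exactly of the $n/2$ non-edges of $G$, which form a perfect matching and thus partition $V$. Hence every proper subfamily $\mathcal{N} \subsetneq \mathcal{M}$ satisfies $\bigcup_{\tau \in \mathcal{N}} \tau \neq V$, while $\mathcal{N} = \mathcal{M}$ covers $V$; this shows that $\Gamma(X(G))$ is precisely the boundary of the $(n/2-1)$-simplex on vertex set $\mathcal{M}$, so $\tilde{H}_{n/2-2}(\Gamma(X(G))) \neq 0$. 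Plugging $|V|=n$ and $k = n/2-1$ into Theorem \ref{thm:alexander_nerve} gives $\tilde{H}_{n/2-1}(X(G)) \neq 0$, so $X(G)$ itself is not $(n/2-1)$-Leray and $L(X(G)) \geq n/2$, as required.

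The main obstacle is really just this topological step: confirming that $X(K_{2,2,\ldots,2})$, which is a triangulation of $S^{n/2-1}$ (the boundary of the cross-polytope), supports nontrivial top-dimensional homology. Routing the argument through $\Gamma(X)$ and Theorem \ref{thm:alexander_nerve} keeps everything entirely within the machinery already developed in the paper and avoids invoking external topology.
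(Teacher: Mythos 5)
Your proposal is correct, and its skeleton coincides with the paper's: the upper bound and the ``only if'' direction come from the chain $\rep(X(G))\leq \boxx_1(X(G))=\boxx(G)\leq \left\lfloor \frac{n}{2}\right\rfloor$ (Lemma \ref{lemma:intersection} plus Theorem \ref{thm:roberts}) together with the equality case of Theorem \ref{thm:roberts}, while the ``if'' direction is obtained by exhibiting a nontrivial homology group $\tilde{H}_{n/2-1}(X(G))\neq 0$ and invoking $L(X)\leq \rep(X)$. The only point where you diverge is in how that homology class is produced: the paper simply observes that $X(K_{2,\ldots,2})$ is the boundary of the $\frac{n}{2}$-dimensional cross-polytope, i.e.\ a triangulated $(\frac{n}{2}-1)$-sphere, and reads off the top homology directly, whereas you compute it via the combinatorial Alexander duality of Theorem \ref{thm:alexander_nerve}, noting that the missing faces form a perfect matching so that $\Gamma(X(G))$ is the boundary of the $(\frac{n}{2}-1)$-simplex. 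Your index bookkeeping is right ($|V|-k-3=\frac{n}{2}-2$ when $k=\frac{n}{2}-1$), and the hypothesis of Theorem \ref{thm:alexander_nerve} (that $X$ is not the complete complex) holds. Your route is marginally longer but stays entirely inside the paper's own machinery and avoids having to recognize the cross-polytope boundary; the paper's route is shorter but relies on that geometric identification. Both are fine.
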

The fact that $\rep(X(G))=\frac{n}{2}$ if $G$ is the complete $\frac{n}{2}$-partite graph with sides of size $2$ does not follow directly from Theorem \ref{thm:roberts}. However, it is easy to check that in this case $X(G)$ is the boundary of the $\frac{n}{2}$-dimensional cross-polytope; in particular, it has non-trivial $\left(\frac{n}{2}-1\right)$-dimensional homology group. Thus, $X(G)$ is not $\left(\frac{n}{2}-1\right)$-Leray, and therefore is not $\left(\frac{n}{2}-1\right)$-representable.

We conjecture that for $d\geq 1$, the following extension of Proposition \ref{prop:roberts_rep_version} holds:

\begin{conjecture}
\label{conj:rep_bound}
Let $X$ be simplicial complex with $n$ vertices, satisfying $h(X)\leq d$. Then,
\[
\rep(X)\leq \left\lfloor\frac{d n}{d+1}\right\rfloor.
\]
Moreover, $\rep(X)=\frac{d n}{d+1}$ if and only if the missing faces of $X$ consist of $\frac{n}{d+1}$ pairwise disjoint sets of size $d+1$.
\end{conjecture}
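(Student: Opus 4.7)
The plan is to prove the upper bound and the equality characterization separately, using Theorem~\ref{thm:main_cor_intro_version} as the main tool for the upper bound. The cleanest conceivable witness would be a partition $V=V_1\sqcup\dots\sqcup V_k$ with each $|V_i|\leq d+1$ and each $V_i\notin X$ (so each $V_i$ contains a missing face), such that every missing face $\tau$ of $X$ satisfies $|\tau\setminus V_i|\leq 1$ for some $i$; for such a partition $\sum_{i=1}^k(|V_i|-1)=n-k\leq n-\lceil n/(d+1)\rceil=\lfloor dn/(d+1)\rfloor$, and Theorem~\ref{thm:main_cor_intro_version} yields the desired bound. I would proceed by induction on $n$: if some vertex $v$ lies in no missing face then $X$ is a cone with apex $v$ over $X[V\setminus\{v\}]$, and representing $v$ by a large ball containing all other sets gives $\rep(X)=\rep(X[V\setminus\{v\}])\leq\lfloor d(n-1)/(d+1)\rfloor\leq\lfloor dn/(d+1)\rfloor$; otherwise I would try to select a missing face $\tau_1$ of size $d+1$ (enlarging a smaller one if necessary), take $V_1=\tau_1$, and recurse on the restriction $X[V\setminus V_1]$.

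The equality characterization has an easy ``if'' half and a harder ``only if'' half. If the missing faces of $X$ are $m=n/(d+1)$ pairwise disjoint $(d+1)$-sets $\tau_1,\dots,\tau_m$, then $X$ is the join of the boundaries $\partial\Delta^d\cong S^{d-1}$ of the $d$-simplices on the $\tau_i$; iterating $S^a*S^b\cong S^{a+b+1}$ gives $X\cong S^{md-1}=S^{dn/(d+1)-1}$. Its top reduced homology is nontrivial, so $X$ is not $(dn/(d+1)-1)$-Leray and therefore not $(dn/(d+1)-1)$-representable; combined with the upper bound this forces $\rep(X)=dn/(d+1)$. Conversely, to prove $\rep(X)<dn/(d+1)$ whenever the missing faces are not in this extremal configuration, one would need to save a unit in $\sum(|V_i|-1)$ during the upper bound construction, in the spirit of the strict refinement used in the proof of Theorem~\ref{thm:boxd}.

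The main obstacle is the upper bound step for complexes whose missing faces are ``spread out,'' most notably Steiner $(d,d+1,n)$-systems. In such a system any two distinct blocks share at most $d-1$ vertices, so for any block $\tau$ every other block $\tau'$ has $|\tau'\setminus\tau|\geq 2$; hence each block covers only itself in the sense of Theorem~\ref{thm:main_cor_intro_version}, and a family of $(d+1)$-sized covering sets alone forces $\sum(|V_i|-1)\geq |\mathcal{M}|\cdot d=\frac{d}{d+1}\binom{n}{d}$, vastly exceeding $\lfloor dn/(d+1)\rfloor$. The naive recursion above also fails here: after peeling off a block $V_1=\tau_1$, any other block $\tau'$ with $|\tau'\cap V_1|$ strictly between $1$ and $d$ is covered neither by $V_1$ (which would require $|\tau'\cap V_1|\geq d$) nor by any $V_j\subset V\setminus V_1$ (which would require $|\tau'\cap V_1|\leq 1$). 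Closing this gap is the heart of the problem; I expect it to require either a more global choice of possibly overlapping $V_i$ of varying sizes, each weighted by $|V_i|-1$ in the covering, or a fundamentally different scheme that bypasses Theorem~\ref{thm:main_cor_intro_version} and constructs a geometric representation in $\Rea^{\lfloor dn/(d+1)\rfloor}$ directly, perhaps in the spirit of Wegner's original construction.
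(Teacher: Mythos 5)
You are attempting to prove Conjecture \ref{conj:rep_bound}, which the paper explicitly leaves open; there is no proof in the paper to compare against, and your proposal does not close the gap either, as you yourself acknowledge. The parts you do carry out are sound and consistent with the paper's discussion: the arithmetic showing that a partition $V=V_1\sqcup\dots\sqcup V_k$ into non-faces of size at most $d+1$, with the covering property of Theorem \ref{thm:main_cor_intro_version}, would give $\sum_i(|V_i|-1)=n-k\leq\lfloor dn/(d+1)\rfloor$; the cone reduction for a vertex lying in no missing face; and the ``if'' half of the equality statement, where the join of $n/(d+1)$ copies of $\partial\Delta^d\cong S^{d-1}$ is a sphere $S^{dn/(d+1)-1}$ whose top reduced homology obstructs $\left(\frac{dn}{d+1}-1\right)$-representability (the same Leray-number argument the paper uses for the cross-polytope in the $d=1$ case of Proposition \ref{prop:roberts_rep_version}), while the disjoint blocks themselves furnish a covering achieving the matching upper bound.

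But the heart of the conjecture, the upper bound when the missing faces are spread out, is exactly where you stop, and it is exactly where the paper stops too. Section \ref{sec:conc} makes the same diagnosis with the concrete example $X_{2,7}$ (the Fano plane complex): since every pair of vertices lies in a missing face, the best bound obtainable from Theorem \ref{prop:dom_by_one} is $\rep(X_{2,7})\leq 5$, against the conjectured $\lfloor 14/3\rfloor=4$. The paper recovers $4$ for that one complex only by an ad hoc device (Lemma \ref{lemma:adding_two_simplices}, adjoining pairs of maximal faces at the cost of one dimension each), which it concedes is unlikely to generalize, and it even poses the next case, $\rep(X_{2,9})\leq 5$, as a separate conjecture. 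So your proposal correctly identifies the obstruction (Steiner-system configurations defeat any covering by $(d+1)$-sets weighted by $|V_i|-1$, and the peeling recursion leaves blocks with intermediate intersection uncovered) but contains a genuine, unclosed gap: neither a refinement of the partition/recursion scheme nor any technique in the paper is known to yield the bound, and a proof would require a new idea.
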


Analogous bounds are known to hold for Leray numbers (see \cite{adamaszek2014extremal}) and for collapsibility numbers (a combinatorial parameter that is bounded from above by the representability of the complex, and bounded from below by its Leray number; see \cite{kim2019complexes}). Conjecture \ref{conj:rep_bound}, if true, would imply both of these results. 

The results presented in this paper do not seem suitable for dealing with Conjecture \ref{conj:rep_bound}. One of the simplest examples where our methods fail is the complex $X_{2,7}$, the complex whose set of missing faces forms a Steiner $(2,3,7)$-system (usually referred to as the Fano plane). Since any two vertices in $X_{2,7}$ are contained in a missing face, the best bound we can obtain from an application of Theorem \ref{prop:dom_by_one} is $\rep(X_{2,7})\leq 5$, which is larger than the conjectured bound $\left\lfloor\frac{2\cdot 7}{3}\right\rfloor=4$. This bound can be proved, however, by the following simple method:

\begin{lemma}\label{lemma:adding_two_simplices}
Let $X$ be a $d$-representable simplicial complex on vertex set $V$. Let $\sigma_1,\sigma_2\subset V$ such that $\sigma_1\cap \sigma_2\in X$.
Then, the complex $X'=X\cup 2^{\sigma_1}\cup 2^{\sigma_2}$ is $(d+1)$-representable.
\end{lemma}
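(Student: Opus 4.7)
The plan is to lift a given $d$-representation $\{C_v\}_{v\in V}$ of $X$ into $\Rea^{d+1}\cong\Rea^d\times\Rea$ by placing its base copy at height $0$ and attaching two apex points, one above and one below, that witness the two new maximal simplices $\sigma_1$ and $\sigma_2$. Using the hypothesis $\sigma_1\cap\sigma_2\in X$, I fix a point $p\in\bigcap_{v\in\sigma_1\cap\sigma_2}C_v$ (taking $p\in\Rea^d$ arbitrary when $\sigma_1\cap\sigma_2=\emptyset$) and set $q_1=(p,1)$ and $q_2=(p,-1)$. For each $v\in V$ I define $C'_v\subset\Rea^{d+1}$ as the convex hull of $C_v\times\{0\}$ together with whichever of $\{q_1,q_2\}$ is \emph{available} to $v$: both when $v\in\sigma_1\cap\sigma_2$, only $q_1$ when $v\in\sigma_1\setminus\sigma_2$, only $q_2$ when $v\in\sigma_2\setminus\sigma_1$, and neither when $v\notin\sigma_1\cup\sigma_2$ (so $C'_v=C_v\times\{0\}$ in that last case). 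The claim is that $\{C'_v\}_{v\in V}$ is a $(d+1)$-representation of $X'$.

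The easy direction, showing $\bigcap_{v\in\sigma}C'_v\neq\emptyset$ for every $\sigma\in X'$, splits into three cases: for $\sigma\in X$, any witness in $\bigcap_{v\in\sigma}C_v$ placed at height $0$ lies in every $C'_v$; for $\sigma\subset\sigma_1$, the apex $q_1$ belongs to every $C'_v$ with $v\in\sigma_1$ by construction; and $\sigma\subset\sigma_2$ is handled symmetrically by $q_2$. The remaining task is to show $\bigcap_{v\in\sigma}C'_v=\emptyset$ whenever $\sigma\notin X'$, i.e., $\sigma\notin X$, $\sigma\not\subset\sigma_1$, and $\sigma\not\subset\sigma_2$. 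The key will be a height argument: because $\sigma\not\subset\sigma_1$, some $v\in\sigma$ has $C'_v$ contained in the half-space $\{\text{height}\leq 0\}$; because $\sigma\not\subset\sigma_2$, some vertex of $\sigma$ has $C'_v$ contained in $\{\text{height}\geq 0\}$. Hence any common point has height exactly $0$, and it suffices to analyze the slice at height $0$.

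The main obstacle, and the single place the hypothesis $\sigma_1\cap\sigma_2\in X$ is essential, is verifying that for every $v\in V$ the slice $C'_v\cap(\Rea^d\times\{0\})$ coincides with $C_v\times\{0\}$. For the pyramid and trivial cases this is an immediate height-parameter computation; for the bipyramid case $v\in\sigma_1\cap\sigma_2$ the slice turns out to be $\mathrm{conv}(C_v\cup\{p\})\times\{0\}$, which collapses to $C_v\times\{0\}$ precisely because $p\in C_v$ by the choice of $p$. Once this slice identification is in hand, the height-$0$ slice of $\bigcap_{v\in\sigma}C'_v$ equals $\bigl(\bigcap_{v\in\sigma}C_v\bigr)\times\{0\}$, which is empty since $\sigma\notin X$; this completes the verification and yields the $(d+1)$-representation of $X'$.
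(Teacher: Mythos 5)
Your construction is exactly the one in the paper: lift the $d$-representation to height $0$ in $\Rea^{d+1}$, pick a common point $p$ of $\{C_v\}_{v\in\sigma_1\cap\sigma_2}$ (which exists since $\sigma_1\cap\sigma_2\in X$), and cone the appropriate sets to the apexes $(p,\pm 1)$. The paper leaves the verification ``to the reader,'' and your height-slice argument (in particular the observation that the level-$0$ slice of the bipyramid over $C_v$ is $\mathrm{conv}(C_v\cup\{p\})\times\{0\}=C_v\times\{0\}$ because $p\in C_v$) correctly supplies it.
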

\begin{proof}

Let $e_1,\ldots,e_{d+1}$ be the standard basis for $\Rea^{d+1}$. We identify $\Rea^d$ with the hyperplane $H=\{x\in \Rea^{d+1} : \, x\cdot e_{d+1}=0\}$ in $\Rea^{d+1}$.

Let $\mathcal{P}=\{P_v\}_{v\in V}$ be a representation of $X$ in $\Rea^d$.
%We define a representation $\{P'_v\}_{v\in V}$ of $X'$ in $\Rea^{d+1}$ as follows:
Let $x\in \cap_{v\in \sigma_1\cap \sigma_2} P_v\subset H$ (note that $\cap_{v\in \sigma_1\cap \sigma_2} P_v\neq \emptyset$ since $\sigma_1\cap \sigma_2\in X$ and $\mathcal{P}$ is a representation of $X$). Let $x_1=x+e_{d+1}$ and $x_2=x-e_{d+1}$.

For $v\in V$, we define
\[
P_v'=\begin{cases}
\text{conv}(P_v\cup \{x_1\}\cup\{x_2\}) & \text{ if } v\in \sigma_1\cap \sigma_2,\\
\text{conv}(P_v\cup \{x_1\}) & \text{ if } v\in \sigma_1\setminus \sigma_2,\\
\text{conv}(P_v\cup \{x_2\}) & \text{ if } v\in \sigma_2\setminus \sigma_1,\\
P_v & \text{ if } v\notin \sigma_1\cup \sigma_2.
\end{cases}
\]
It is left to the reader to check that $\{P'_v\}_{v\in V}$ is indeed a representation of $X'$.
\end{proof}

\begin{proposition}
\[
    \rep(X_{2,7})\leq 4.
\]
\end{proposition}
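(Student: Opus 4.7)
The plan is to construct $X_{2,7}$ from a $1$-representable complex by three successive applications of Lemma~\ref{lemma:adding_two_simplices}. Label the seven lines of the Fano plane as $L_1 = \{1,2,3\}$, $L_2 = \{1,4,5\}$, $L_3 = \{1,6,7\}$, $L_4 = \{2,4,6\}$, $L_5 = \{2,5,7\}$, $L_6 = \{3,4,7\}$, $L_7 = \{3,5,6\}$, and set $M_i := [7] \setminus L_i$. A short check (every $4$-set avoiding all lines is the complement of a line, and every larger set contains a line) shows that the facets of $X_{2,7}$ are precisely the seven $4$-sets $M_1, \ldots, M_7$, so $X_{2,7} = \bigcup_{i=1}^{7} 2^{M_i}$. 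It therefore suffices to build this union.

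Take $X_0 := 2^{M_1} \cup 2^{M_2} \cup 2^{\{1,4\}}$. The first task is to verify that $X_0$ is $1$-representable by exhibiting an interval model on the real line: for instance, four pairwise-overlapping intervals for the vertices of $M_1$, four more for the vertices of $M_2$ arranged so that the two clusters meet only along the intervals assigned to $6$ and $7$, and a short interval for vertex $1$ that meets only the interval assigned to $4$. Then apply Lemma~\ref{lemma:adding_two_simplices} three times. In step~1, take $\sigma_1 = M_5$ and $\sigma_2 = M_7$: since $M_5 \cap M_7 = \{1,4\} \in X_0$, the complex $X_1 := X_0 \cup 2^{M_5} \cup 2^{M_7}$ is $2$-representable. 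In step~2, take $\sigma_1 = M_3$ and $\sigma_2 = \{1,5\}$: since $M_3 \cap \{1,5\} = \{5\} \in X_1$, the complex $X_2 := X_1 \cup 2^{M_3} \cup 2^{\{1,5\}}$ is $3$-representable. In step~3, take $\sigma_1 = M_4$ and $\sigma_2 = M_6$: since $M_4 \cap M_6 = \{1,5\} \in X_2$ (installed in the previous step), the complex $X_3 := X_2 \cup 2^{M_4} \cup 2^{M_6}$ is $4$-representable. Because $X_3$ contains $2^{M_i}$ for every $i \in [7]$ and every simplex added lies in some $M_i$ (note $\{1,4\} \subset M_5$ and $\{1,5\} \subset M_4$), we conclude $X_3 = X_{2,7}$.

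The main obstacle is that the Fano plane forbids the naive strategy of feeding the lemma only pairs of full $M_i$'s. Indeed, for any two distinct lines $L_i, L_j$ one has $|L_i \cup L_j| = 5$ and no third line fits inside $L_i \cup L_j$; equivalently, the $2$-element set $M_i \cap M_j$ is contained in no $M_k$ with $k \notin \{i,j\}$. Hence the intersection needed to apply the lemma to two fresh $M$'s is never yet a simplex of the current complex. The remedy is to seed $X_0$ with the auxiliary edge $\{1,4\} = M_5 \cap M_7$ and, midway through, to install the edge $\{1,5\} = M_4 \cap M_6$ as the companion to $M_3$ in step~2; these two pre-placed edges bootstrap the pair-$M$ rounds in steps~1 and~3.
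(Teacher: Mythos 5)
Your proof is correct and follows essentially the same strategy as the paper's: write $X_{2,7}$ as the union of the seven complements of lines, start from a $1$-representable subcomplex consisting of two of these facets plus a seed simplex, and apply Lemma~\ref{lemma:adding_two_simplices} three times, installing an auxiliary edge in the middle step to enable the final one. Apart from the (isomorphic) relabelling of the Fano plane and the choice of seed simplices, the construction matches the paper's.
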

\begin{proof}
We identify the vertex set of $X_{2,7}$ with the set $[7]=\{1,2,\ldots,7\}$. Then, the set of missing faces of $X_{2,7}$ is the set
\[
    \mathcal{M}=\{\{1,2,3\},\{1,4,5\},\{1,6,7\},\{2,4,7\},\{3,4,6\},\{2,5,6\},\{3,5,7\}\}.
\]
It is easy to check that the set of maximal faces of $X_{2,7}$ is the set whose elements are the complements of the sets in $\mathcal{M}$:
\begin{align*}
    \{\{4,5,6,7\},\{2,3,6,7\},\{2,3,4,5\},\{1,3,5,6 \}&,
    \\
    \{1,2,5,7\},\{1,3,4,7\},\{1,2,4,6 &\}\}.
\end{align*}
Let $X_0$ be the complex on vertex set $[7]$ whose set of maximal faces is:
\[
\{ \{1,2,4\},\{2,3,4,5\},\{4,5,6,7\}\}.
\]
It can be checked that the following is a representation of $X_0$ in $\Rea^1$:
\begin{align*}
    P_1 &= [0,1], &
    P_2 &= [1,2],\\
    P_3 &= [2,3], &
    P_4 &= [0,5],\\
    P_5 &= [2,5], &
    P_6 &=P_7 = [4,5].
\end{align*}
Let $X_1= X_0 \cup 2^{\{1,2,5,7\}}\cup 2^{\{1,2,4,6\}}$. Since $\{1,2,5,7\}\cap \{1,2,4,6\}=\{1,2\}\in X_0$ then, by Lemma \ref{lemma:adding_two_simplices}, $X_1$ is $2$-representable. 

Let $X_2= X_1\cup 2^{\{1,3\}}\cup 2^{\{2,3,6,7\}}$. Since $\{1,3\}\cap \{2,3,6,7\}=\{3\}\in X_1$ then, by Lemma \ref{lemma:adding_two_simplices}, $X_2$ is $3$-representable.

Finally, let $X_3= X_2 \cup 2^{\{1,3,5,6\}}\cup 2^{\{1,3,4,7\}}$. Since $\{1,3,5,6\}\cap \{1,3,4,7\}= \{1,3\}\in X_2$ then, by Lemma \ref{lemma:adding_two_simplices}, $X_3$ is $4$-representable. But it is easy to check that $X_3$ is in fact the complex $X_{2,7}$.
\end{proof}

Lemma \ref{lemma:adding_two_simplices} gives non-trivial bounds only for complexes with a small number of maximal faces, so it seems unlikely that such a method will be useful in more general cases of the problem.

We conclude with the following problem, whose solution may be a (very modest) step towards Conjecture \ref{conj:rep_bound}:
\begin{conjecture}
    Let $X_{2,9}$ be the simplicial complex whose missing faces form a Steiner $(2,3,9)$-system (that is, they are the lines of the affine plane of order 3). Then,
    \[  
    \rep(X_{2,9})\leq 5.
    \]
\end{conjecture}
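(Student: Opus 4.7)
The plan is to adapt the iterative doubling argument used above for $X_{2,7}$: build a chain $X_0 \subsetneq X_1 \subsetneq \cdots \subsetneq X_r = X_{2,9}$ of simplicial complexes in which $X_0$ is $d_0$-representable for some small $d_0$, each step $X_{i-1} \to X_i$ is a single application of Lemma~\ref{lemma:adding_two_simplices} that raises the representability by one, and $d_0 + r \leq 5$. As in the Fano-plane construction, $X_0$ need not have the lines of $AG(2,3)$ as its minimal missing faces: it may have many missing edges that get filled in during the doubling steps, provided only that the final complex $X_r$ equals $X_{2,9}$. A natural starting point is to take $X_0$ to be the complex generated by a few $4$- or $5$-element caps of $\mathbb{F}_3^2$ whose nerve can be realized in $\Rea^{d_0}$, either directly or via an application of Theorem~\ref{thm:main_cor_intro_version}.

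For the doubling steps I would exploit the $AGL(2,3)$-symmetry of $AG(2,3)$. The translation subgroup $\mathbb{F}_3^2 \leq AGL(2,3)$ acts freely on the $54$ maximal $4$-caps of $X_{2,9}$, partitioning them into six orbits of size nine. Within any orbit, two distinct caps $\sigma_1,\sigma_2$ intersect in at most three points, and any such intersection is a subset of a cap and hence a face of $X_{2,9}$; consequently the compatibility hypothesis $\sigma_1 \cap \sigma_2 \in X_{i-1}$ of Lemma~\ref{lemma:adding_two_simplices} will be satisfied provided $X_{i-1}$ already contains every $3$-element cap. The goal is then to pick $X_0$ together with a sequence of pairs of maximal $4$-caps so that after at most $5 - d_0$ doubling steps we reach $X_{2,9}$.

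The main obstacle is purely quantitative: Lemma~\ref{lemma:adding_two_simplices} adds at most two new maximal simplices per unit of representability, so even starting from a $1$-representable $X_0$, four doubling steps can produce at most eight new maximal caps, far fewer than the $54$ required. Closing this gap will require either a substantial strengthening of Lemma~\ref{lemma:adding_two_simplices} --- for example, one that permits adding $m$ pairwise-compatible simplices $\sigma_1,\ldots,\sigma_m$ (with $\sigma_i \cap \sigma_j \in X$ for all $i \neq j$) at a cost of only $\lceil \log_2 m \rceil$ extra dimensions, by placing $m$ auxiliary points in general position along the new coordinate directions and extending each polytope $P_v$ of the representation to reach exactly those auxiliary points labelled by the new simplices containing $v$ --- or an entirely different, direct geometric construction in $\Rea^5$. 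A natural candidate for the latter uses the splitting $\Rea^5 \cong \Rea^2 \times \Rea^3$, with the $\Rea^2$ factor carrying the grid embedding $\mathbb{F}_3^2 \hookrightarrow \{0,1,2\}^2$ so that the eight ``grid-aligned'' lines of $AG(2,3)$ become empty triple intersections in the plane factor, while the $\Rea^3$ factor is used to arrange the four ``wrap-around'' diagonals to have empty intersections without disturbing any $4$-cap intersection. I expect the hardest step to be the verification that such a construction produces exactly the $12$ lines of $AG(2,3)$ as missing faces and no spurious ones.
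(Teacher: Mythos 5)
This statement is one the paper deliberately leaves unproved: it is posed in Section~\ref{sec:conc} as an open problem (``whose solution may be a (very modest) step towards Conjecture~\ref{conj:rep_bound}''), so there is no proof in the paper to compare against. Your proposal is likewise not a proof but a plan, and the obstacle you flag yourself is fatal to the plan's main line. To make it quantitative: $X_{2,9}$ has $\binom{9}{4}-12\cdot 6=54$ maximal faces (the $4$-caps of $AG(2,3)$, on which your orbit count is correct); each application of Lemma~\ref{lemma:adding_two_simplices} costs one dimension and adds at most two maximal faces; and a $1$-representable complex on $9$ vertices is the clique complex of an interval graph, hence has at most $9$ maximal faces. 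So four doubling steps from a $1$-representable base reach at most $17<54$ maximal faces, and starting from a higher-dimensional base only shortens the budget. The hypothesized $\lceil\log_2 m\rceil$ strengthening is not proved (the single witness point $x$ in the lemma's proof does not obviously generalize to a system of $m$ auxiliary points without creating spurious intersections among the enlarged hulls), and even if true it would not suffice, since $\lceil\log_2 54\rceil=6>5$ and no low-dimensional base can absorb enough of the $54$ caps to compensate.

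The $\Rea^5\cong\Rea^2\times\Rea^3$ construction, at least in the product form you describe, is provably impossible by the paper's own machinery. If each $C_v=A_v\times B_v$ with $A_v\subset\Rea^2$, $B_v\subset\Rea^3$ convex, then by the argument of Lemma~\ref{lemma:intersection} the nerve is $Y_1\cap Y_2$ with $Y_1$ $2$-representable (hence $2$-Leray) and $Y_2$ $3$-representable (hence $3$-Leray). Every missing face of $Y_j$ is a non-face of $X_{2,9}$ and so contains a line, and each of the $12$ lines is a missing face of $Y_1$ or of $Y_2$. If two lines $A,B$ were missing faces of $Y_1$, then (since missing faces of a $2$-Leray complex have size at most $3$ by Lemma~\ref{lemma:helly_for_leray}, and no third line of $AG(2,3)$ lies inside $A\cup B$) Lemma~\ref{lemma:obstruction_to_d_leray} gives $\tilde{H}_{|A\cup B|-3}(Y_1[A\cup B])\neq 0$ with $|A\cup B|-3\geq 2$, contradicting $2$-Lerayness; so $Y_1$ carries at most one line. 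If two \emph{disjoint} lines $A,B$ were missing faces of $Y_2$, the same lemma gives $\tilde{H}_{3}(Y_2[A\cup B])\neq 0$, contradicting $3$-Lerayness; so $Y_2$ carries at most one line from each of the four parallel classes. Since $1+4=5<12$, no such decomposition exists. The conjecture therefore remains open after your attempt, and any successful construction in $\Rea^5$ must use genuinely non-product convex bodies.
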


\section*{Acknowledgment}
I thank Roy Meshulam for his comments.

\bibliographystyle{abbrv}
\bibliography{biblio}

\end{document}